\documentclass[12pt,reqno]{amsart}
\usepackage{color}
\usepackage{enumerate}
\usepackage{amsthm,amssymb}
\usepackage{amsmath}
\usepackage{graphicx,epsfig}
\usepackage{amsmath}
\usepackage{bbm}
\def\1{\mathbbm{1}}
\usepackage[mathscr]{euscript}
\usepackage{mathrsfs}

\parindent1em
\sloppy \pagestyle{plain} \textwidth=16cm \textheight=22.5cm
\oddsidemargin=0cm \evensidemargin=0cm

\newtheorem{thm}{Theorem}[section]
\newtheorem{lem}[thm]{Lemma}
\newtheorem{prop}[thm]{Proposition}
\newtheorem{cor}[thm]{Corollary}

\theoremstyle{definition}
\newtheorem{ex}[thm]{Example}
\newtheorem{rmk}[thm]{Remark}
\newtheorem{defn}[thm]{Definition}


\def\al{\alpha}

\def\calS{{\mathcal{S}}}

\newcommand{\ba}{\begin{align*}}
\newcommand{\ea}{\end{align*}}
\newcommand{\bal}{\begin{align}}
\newcommand{\eal}{\end{align}}
\newcommand{\be}{\begin{enumerate}}  
\newcommand{\ee}{\end{enumerate}}

\newcommand{\bpr}{\begin{proof}}  
\newcommand{\epr}{\end{proof}}

\begin{document}

\numberwithin{equation}{section}

\title[]{On evolution equations with white-noise boundary conditions}
\author[]{M. Fkirine, S. Hadd, and A. Rhandi}

\address{M. Fkirine and S. Hadd are with Department of Mathematics, Faculty of Sciences, Hay Dakhla, BP8106, Agadir, Morocco;  E-mail (Fkirine) fkirinemohamed@gmail.com, (Hadd) s.hadd@uiz.ac.ma}
\address{A. Rhandi is with Dipartimento di Matematica, Universita degli Studi di Salerno; Via Giovanni Paolo II, 132, Fisciano (Sa), 84084, Italy, E-mail: arhandi@unisa.it}
\thanks{This article is based upon work from COST Action 18232 MAT-DYN-NET, supported by COST (European Cooperation in Science and Technology), www.cost.eu. The third author is member of the Gruppo Nazionale per l'Analisi Matematica, la Probabilità e le loro Applicazioni (GNAMPA) of the Istituto Nazionale di Alta Matematica (INdAM). This work has been supported by the National Center for Scientific and Technical Research (CNRST) via the FINCOME program.}
\vskip 0.5cm
 \keywords{Stochastic evolution equations, boundary conditions, white-noise, unbounded perturbations}
\subjclass[2020]{60H15, 93E03, 47D06}
\medskip

\medskip

\bigskip
\begin{abstract}
In this paper, we delve into the study of evolution equations that exhibit white-noise boundary conditions. Our primary focus is to establish a necessary and sufficient condition for the existence of solutions, by utilizing the concept of admissible observation operators and the Yosida extension for such operators. By employing this criterion, we can derive an existence result, which directly involves the Dirichlet operator. In addition, we also introduce a Desch-Schappacher perturbation result, which proves to be instrumental in further understanding these equations. Overall, our paper presents a comprehensive analysis of evolution equations with white-noise boundary conditions, providing new insights and contributing to the existing body of knowledge in this field.
\end{abstract}
\maketitle
\section{Introduction}
In this paper, our primary focus is on equations that exhibit boundary white-noise conditions of the form:
	\begin{align}\label{00}
		\begin{cases}
			\dot{X}(t)=A_mX(t),& t\in[0,T],\\
			GX(t)=\dot{W}(t), &t\in[0,T],\\
			X(0)=X_0,
		\end{cases}
	\end{align}
where $A_m$ is a linear closed operator that maps from $Z$, a Hilbert space that is continuously and densely embedded in the separable Hilbert space $H$. We consider $(W(t))_{t\in[0,T]}$ as a cylindrical Wiener process over $U$, another separable Hilbert space. The operator $G:Z\to U$ is assumed to be linear.

Our paper aims to provide a comprehensive analysis of these equations, with a particular emphasis on establishing the existence of solutions. To achieve this, we utilize various techniques such as admissible observation operators and the Yosida extension. Our research also presents a Desch-Schappacher perturbation result that provides a deeper understanding of these equations. Overall, our work contributes to the existing literature on boundary white-noise conditions and sheds light on the underlying mathematical properties of these equations.

The equations of the form \eqref{00} were first studied by Balakrishnan \cite{bala} and further developed by Da Prato and Zabczyk \cite{da1993stochastic}, with extensive coverage in \cite[Chap. 13]{da1996ergodic}. Since then, several authors have shown interest in this problem, including \cite{gozzi}, \cite{debussche}, \cite{fabbri}, \cite{masiero}, \cite{guatteri}, \cite{brzezniak}, and \cite{goldys}.

In the literature (e.g., \cite{da1993stochastic}), the standard conditions for \eqref{00} are that $A:=A_m$ with $D(A) = \ker(G)$ is the generator of a strongly continuous semigroup $\mathbb{T}$ on $H$, and that the stationary boundary value problem
\begin{align*}
(A_m-\lambda)z=0, \quad Gz=u,
\end{align*}
has a unique solution $z=\mathbb{D}_\lambda u\in H$ for some $\lambda$ and all $u\in U$. Here, $\mathbb{D}_\lambda$ is the Dirichlet operator associated with $A_m$ and $G$. Using this operator, the problem \eqref{00} can be equivalently expressed as the stochastic Cauchy problem	
\begin{align*}
		\bf(SCP)_{A,B}\;\;
		\begin{cases}
			dX(t)=A_{-1}X(t)dt+BdW(t), \quad t\in[0,T],\\
			X(0)=X_0,
		\end{cases}
	\end{align*}
in $H_{-1}$, where $H_{-1}$ is the extrapolation space associated with $A_m$ and $H$, $A_{-1}$ with domain $D(A)=H$ is the extension of $A$ to $H$, and $B:=(\lambda-A_{-1})\mathbb{D}_{\lambda}\in\mathcal{L}(U,H_{-1})$, as explained in Section \ref{section2}. We define a solution of $\bf(SCP)_{A,B}$ as any $H$-valued process $X(\cdot)$ satisfying
\begin{align*}
X(t)=\mathbb{T}(t)X_0+\int_0^t\mathbb{T}_{-1}(t-s)BdW(s), \qquad t\in[0,T],
\end{align*}
where $\mathbb{T}_{-1}$ is the extrapolation semigroup associated with $\mathbb{T}$ and $H$, which is the semigroup generated by $A_{-1}$. This paper aims to extend the existing literature on this topic by providing new insights into these equations and establishing necessary and sufficient conditions for the existence of solutions.

To study the unique mild solution of $\bf(SCP)_{A,B}$ and \eqref{00}, we introduce the input maps $\Phi_t:L^2([0,t];U)\to H_{-1}$, associated with the control operator $B$ and $A$ (refer to Section \ref{section2} for details). The existence of a unique mild solution for $\bf(SCP)_{A,B}$ and \eqref{00} is guaranteed if and only if $\Phi_t$ admits an extension to a Hilbert-Schmidt operator from $L^2([0,t];U)$ to $H$. This result was first established in Hilbert spaces by Da Prato and Zabczyk in \cite{da1993stochastic}, and later refined in \cite{Vanneerven2013} for Banach spaces. Note that the notion of Hilbert-Schmidt operators is an essential tool in the study of infinite-dimensional systems. In the context of $\bf(SCP)_{A,B}$ and \eqref{00}, the requirement of an extension of $\Phi_t$ to a Hilbert-Schmidt operator is a strong condition, ensuring the existence and uniqueness of solutions.

Let us consider the perturbed stochastic Cauchy problem obtained by adding a linear bounded operator $\mathscr{P}:H\to H_{-1}$ to the generator $A$ in the original problem: 
\begin{align*}
		\bf(SCP)_{A+\mathscr{P},B}\;\;
		\begin{cases}
			dX(t)=\left(A_{-1}+\mathscr{P}\right)X(t)dt+BdW(t), \quad t\in[0,T],\\
			X(0)=X_0,
		\end{cases}
	\end{align*}
The first question that arises is determining the conditions under which the operator $(A_{-1}+\mathscr{P})_{|H}$ is a generator in $H$. This question has already been addressed by Desch and Schappacher, who introduced an appropriate class of perturbations in their work \cite{desch} and in \cite[Chapter III.3.a]{nagel}. Specifically, if $\mathscr{P}\in\mathcal{L}(H,H_{-1})$ is an admissible control operator for $A$ (as defined in Section \ref{section2}), then the part of $A_{-1}+\mathscr{P}$ in $H$ generates a strongly continuous semigroup $\mathscr{T}:=(\mathscr{T}(t))_{t\geq0}$ on $H$.

Despite the importance of studying generator perturbation of stochastic Cauchy problems, the existing literature on this topic is limited. Only a few results have been reported in the literature, namely \cite{Peszat1992}, \cite{haak} and \cite{lahbiri2021}. In \cite{Peszat1992}, Peszat studied the existence and uniqueness of solutions, as well as the absolute continuity of the laws of the solutions of the problem $\bf(SCP)_{A+P,I}$ where the operator $P\in\mathcal{L}(D(A),H)$ is closed, $$\bigcup_{t>0}{\rm Ran} \mathbb{T}(t)\subset D(A)\quad\text{ and}\quad \int_0^T\|P\mathbb{T}(t)\|^2dt<\infty.$$ In \cite{lahbiri2021}, a variation of constant formula for $\bf(SCP)_{A+P,B}$ is given when $\left(W(t)\right)_{t\geq0}$ is a one-dimensional Wiener process and $B\in\mathcal{L}(U,H)$, see also \cite{fkirine2021}. In \cite{haak}, the authors studied the existence of solutions, as well as the invariant measure of $\bf(SCP)_{A+P,B}$ in the context of Banach spaces for $P\in\mathcal{L}(H)$ and $B\in\mathcal{L}(U,H)$.

This paper has two main goals. The first one is to provide a characterization of the existence and uniqueness of solutions to the stochastic Cauchy problem $\bf(SCP)_{A,B}$ using the notion of admissible observation operators and the Yosida extension of unbounded linear operators (see Section \ref{section2}). This characterization will allow us to derive a necessary and sufficient condition for the existence and uniqueness of solutions in terms of the Dirichlet operator, which is more accessible than the condition on the input maps, since it only depends on the operator $B$ and the resolvent of $A$.

The second aim of this paper is to establish the existence and uniqueness of solutions to $\bf(SCP)_{A+\mathscr{P},B}$. To do so, we assume that $\bf(SCP)_{A,B}$ has a solution and $\mathscr{P}$ is an admissible control operator for $A$. However, both the operators $B$ and $\mathscr{P}$ are unbounded, so we need to address this issue. Specifically, we prove that $\bf(SCP)_{A+\mathscr{P},B}$ has a solution if and only if the input maps associated with $(A_{-1}+\mathscr{P})_{|H}$ and $B$ are Hilbert-Schmidt from $L^2([0,T];U)$ to $H$. Unfortunately, we do not know an explicit expression for the extrapolation semigroup of $\mathscr{T}$ and $H$ in terms of the semigroup $\mathbb{T}$. Therefore, we introduce a new notion of $\mathcal{S}$-admissible observation operators (see Definition \ref{def3.3}) to address this problem using a dual approach.

The rest of the paper is organized as follows: In Section \ref{section2}, we present some preliminaries about admissible control and observation operators, as well as the results needed in the following sections.

Section \ref{section3} presents sufficient and necessary conditions for the existence of solutions to $\bf(SCP)_{A,B}$. In Proposition \ref{prop3}, we use the notion of admissible control/observation operators and the Yosida extensions of such operators. In Proposition \ref{car_adm}, we provide a condition that characterizes the existence and uniqueness of solutions directly in terms of the Dirichlet operator.

In Section \ref{section4}, we address the second aim of the paper and prove the existence and uniqueness of solutions to the perturbed stochastic Cauchy problem $\bf(SCP)_{A+\mathscr{P},B}$.

The last section is devoted to an application to a one-dimensional heat equation with a white-noise Neumann boundary control.

{\bf Notation.} Throughout the paper, we use the following notation. $H$, $U$, and $Y$ always denote separable Hilbert spaces. For $T>0$, we denote the spaces $L^2(0,T;U)$ and $L^2(0,T;Y)$ by $\mathcal{U}_T$ and $\mathcal{Y}_T$, respectively. The scalar product is denoted by $\langle \cdot, \cdot \rangle$, equipped with a subscript to specify the space if necessary. $\mathcal{L}(U,H)$ denotes the space of bounded linear operators from $U$ to $H$, endowed with the usual operator norm. $\mathcal{L}_2(U,H)$ denotes the space of Hilbert-Schmidt operators from $U$ to $H$. We use $\|\cdot\|_{\mathcal{L}_2(U,H)}$ to denote the Hilbert-Schmidt norm for these operators or simply $\|\cdot\|_2$ when there is no ambiguity about the spaces. We denote by $\mathbb{C}_\alpha$ the half-plane of all $\lambda\in\mathbb{C}$ with $\text{Re}(\lambda)>\alpha$. Given a separable Hilbert space $\mathscr{X}$, $H^2(\mathscr{X})$ denotes the Hardy space of all analytic functions $f:\mathbb{C}_0\to \mathscr{X}$ that satisfy
\begin{align*}
		\sup_{\alpha>0}\int_{-\infty}^{+\infty}\|f(\alpha+i\beta)\|^2d\beta<\infty.
	\end{align*}
	The norm of $f$ in this space is, by definition,
	\begin{align*}
		\|f\|_{H^2}=\frac{1}{2\pi}\left(\sup_{\alpha>0}\int_{-\infty}^{+\infty}\|f(\alpha+i\beta)\|^2d\beta\right)^{\frac{1}{2}}.
	\end{align*}
In this work, we require some classical background on semigroup theory. Let $A$ be the generator of a strongly continuous semigroup $\mathbb{T} = (\mathbb{T}(t))_{t\geq0}$ on a Hilbert space $H$, with growth bound $\omega_0(\mathbb{T})$. We denote by $H_1$ the space $D(A)$ with the norm $\|x\|_1 = \|(\beta - A)x\|$, where $\beta$ is an arbitrary (but fixed) element in the resolvent set $\rho(A)$. We also denote by $H_{-1}$ the completion of $H$ with respect to the norm $\|x\|_{-1} = \|R(\beta, A)x\|$, where $R(\beta, A) := (\beta - A)^{-1}$, and $\beta$ is as before. It is easy to verify that $H_1$ and $H_{-1}$ are independent of the choice of $\beta$, since different values of $\beta$ lead to equivalent norms on $H_1$ and $H_{-1}$. Note that the norm $\|\cdot\|_1$ is equivalent to the graph norm of $A$. We have $H_1 \subset H \subset H_{-1}$ densely and with continuous embedding. The semigroup $\mathbb{T}$ extends to a strongly continuous semigroup on $H_{-1}$, whose generator is an extension of $A$, with domain $H$. We denote the extensions of $\mathbb{T}$ and $A$ by $\mathbb{T}_{-1} := (\mathbb T_{-1}(t))_{t\geq0}$ and $A_{-1}$, respectively.

Furthermore, we use $H_1^d$ to denote $D(A^\ast)$ with the norm $\|x\|_1^d = \|(\bar{\beta} - A^\ast)x\|$ and $H_{-1}^d$ to denote the completion of $H_1^d$ with respect to the norm $\|x\|_{-1}^d = \|R(\bar{\beta},A^\ast)x\|$, where $A^\ast$ is the adjoint operator of $A$ and $\bar{\beta}$ is an arbitrary (but fixed) element in the resolvent set $\rho(A^\ast)$. Note that $H_{-1}$ is the dual of $H_1^d$ with respect to the pivot space $H$.
\section{Background on admissible control/observation operators}\label{section2}
In this section, we present general information about admissible control and observation operators. The content of this section can be found in greater detail, along with numerous references, in \cite{weiss1989admissible}, \cite{weiss1989admissiblea}, and \cite[Chapter 4]{tucsnak2009observation}.

Throughout this section, we assume that $H$, $U$, $Y$, and $Z$ are separable Hilbert spaces, with $Z$ continuously and densely embedded in $H$, and $T>0$ is a real number. We consider the following boundary control problem:
\begin{align}\label{conpro}
		\begin{cases}
			\dot{x}(t)=A_mx(t), & t\in[0,T], \\
			Gx(t)=u(t), &t\in[0,T],\\
			x(0)=x_0\in H,
		\end{cases}
	\end{align}
	where $u\in \mathcal{U}_T$ is a control function, $A_m:Z\subset H\to H $ is a closed linear operator and $G:Z\to U$ is a linear operator such that the following assumptions
	\begin{enumerate}
		\item[\bf(A1)] The operator $G:Z\to U$ is surjective,
		\item[\bf(A2)] The operator $A:=(A_m)_{|D(A)}$ with $D(A):=\ker G$ generates a $C_0$-semigroup $\mathbb{T}:=(\mathbb{T}(t))_{t\geq0}$ on $H$,
	\end{enumerate}
	are satisfied. It is shown by Greiner \cite[Lemmas 1.2, 1.3]{Greiner} that under the assumptions {\bf(A1)} and {\bf(A2)}, the domain $D(A_m)$ can be viewed as the direct sum of $ D(A)$ and $\ker(\lambda-A_m)$ for any $\lambda\in\rho(A)$. Moreover, the operator $G_{|\ker(\lambda-A_m)}$ is invertible and the inverse
	\begin{align*}
		\mathbb{D}_\lambda:=\left(G_{|\ker(\lambda-A_m)}\right)^{-1}: U\to \ker(\lambda-A_m)\subset H, \qquad \lambda\in\rho(A),
	\end{align*}
	is bounded. The operator $\mathbb{D}_\lambda$ is called the Dirichlet operator associated with $A_m$ and $G$. We consider the following operator
	\begin{align*}
		B:=(\lambda-A_{-1})\mathbb{D}_\lambda\in\mathcal{L}(U,H_{-1}), \qquad \lambda\in\rho(A).
	\end{align*}
	As for any $u\in U$ and  $\lambda\in\rho(A)$ we have $\mathbb{D}_\lambda u\in \ker (\lambda-A_m)$ so $\lambda\mathbb{D_\lambda}u=A_m\mathbb{D}_\lambda u$. Hence,
	\begin{align*}
		\left(A_m-A_{-1}\right)\mathbb{D}_\lambda u=(\lambda-A_{-1})\mathbb{D}_\lambda u=Bu.
	\end{align*}
	Since $\mathbb{D}_\lambda$ is the inverse of $G_{|\ker(\lambda-A_m)}$ and $D(A_m)=D(A)\oplus \ker(\lambda-A_m)$, it follows that
	\begin{align*}
		A_m=(A_{-1}+BG)_{|Z}.
	\end{align*}
	Thus, the boundary control system \eqref{conpro} can be reformulated as
	\begin{align}\label{conpro1}
		\begin{cases}
			\dot{x}(t)=A_{-1}x(t)+Bu(t), &t\in[0,T],\\
			x(0)=x_0\in H.
		\end{cases}
	\end{align}
	The mild solution of the equation \eqref{conpro1} is given by
	\begin{align*}
		x(t)=\mathbb{T}(t)x_0+\Phi_tu, \qquad u\in \mathcal{U}_T,\;\;x_0\in H,
	\end{align*}
	where $\Phi_t\in\mathcal{L}(\mathcal{U}_T,H_{-1})$ is defined by
	\begin{align}\label{inp-map}
		\Phi_tu:=\int_0^t \mathbb{T}_{-1}(t-s)Bu(s)ds.
	\end{align}
	Notice that in the above formula, $\mathbb{T}_{-1}$ acts on $H_{-1}$ and the integration is carried out in $H_{-1}$. This motivates the following definition.
\begin{defn}
		The operator $B\in\mathcal{L}(U,H_{-1})$ is called an admissible control operator
		for $A$ if \text{Ran}$(\Phi_\tau)\subset H$ for some $\tau>0$.
	\end{defn}
	It is worth noting that if $B$ is an admissible control operator for $A$, then the closed graph theorem guarantees that $\Phi_t\in\mathcal{L}(\mathcal{U}_T,H)$ for all $t\ge 0$. As a result, for any $u\in \mathcal{U}_T$ and $x_0\in H$, the solutions $x(\cdot)$ of \eqref{conpro1} stay in $H$ and form a continuous $H$-valued function of $t$. The operators $\Phi_t$ are commonly referred to as input maps associated with the pair $(A,B)$.
	
	Now we deal with the concept of admissible observation operators, a dual concept of admissible control operators when we work in reflexive Banach spaces, in particular Hilbert spaces. This concept is introduced and developed in \cite{weiss1989admissible}. We have the following definition:
\begin{defn}
		The operator $C\in\mathcal{L}(H_1,Y)$ is called an admissible observation operator for $A$ if for some (hence all) $\al>0$ there is a constant $\gamma:=\gamma(\al)>0$ such that
		\begin{align*}
			\int_0^\al \|C\mathbb{T}(t)x\|^2dt\leq \gamma^2\|x\|^2
		\end{align*}
for any $x\in D(A)$.
\end{defn}
Suppose $C\in\mathcal{L}(H_1,Y)$ is an admissible observation operator for $A$. Then,  the map $\Psi_T$ defined by
\begin{equation*}
(\Psi_T x)(t)=C \mathbb T(t)x, \quad x\in D(A), \quad t\in [0,T],
\end{equation*}
has an extension to an operator $\Psi_T\in\mathcal{L}(H,\mathcal{Y}_T)$. The operators $\Psi_T$ are called output maps corresponding to the pair $(A,C)$.

As shown in \cite[Section 4.4]{tucsnak2009observation},  $B\in\mathcal{L}(U,H_{-1})$ is an admissible control operator for $\mathbb{T}$ if and only if $B^*\in\mathcal{L}\left(H_1^d,U\right)$ is an admissible observation operator for the dual semigroup $\mathbb{T}^*$. Moreover,  for every $T>0$ the adjoint $\Phi_T^*\in\mathcal{L}(H,\mathcal{U}_T)$ of the operator $\Phi_T$ introduced in \eqref{inp-map} is given by
	\begin{align*}
		\left(\Phi_T^*x\right)(t)=\left(\Psi_T^dx\right)(T-t), \qquad t\in[0,T],\;\;x\in H,
	\end{align*}
	where $(\Psi_T^d)_{T\geq0}$ are the output maps corresponding to the pair $(A^*, B^*)$.
	
	Now, we introduce the Yosida extension of $C$, denoted $C_\Lambda$, by
	\begin{align*}
		&C_\Lambda x=\lim_{\lambda\to+\infty}C\lambda R(\lambda,A)x,\\ &D(C_\Lambda)=\left\{x\in H, \quad \lim_{\lambda\to+\infty}C\lambda R(\lambda,A)x\;\;\text{exists in}\; Y\right\}.
	\end{align*}
	Clearly, $H_1\subset D(C_\Lambda)\subset H$. We note that if $C$ is an admissible observation operator for $A$, the representation theorem of Weiss \cite[Theorem 4.5]{weiss1989admissible}, shows that \text{Ran}$(\mathbb{T}(t))\subset D(C_\Lambda)$ and
	\begin{align*}
		(\Psi_T x)(t)=C_\Lambda \mathbb{T}(t)x,
	\end{align*}
	for all $x\in H$ and for almost every $t\in (0,T]$.

\section{Equations with white-noise boundary conditions}\label{section3}
	Let $(\Omega,\mathcal{F},\mathbb{P})$ be a probability space with right continuous increasing family $\mathbf{F}=(\mathcal{F}_t)_{t\geq0}$ of sub-$\sigma$-fields of $\mathcal{F}$ each containing $\mathbb{P}$-null sets. Let $(e_n)_{n\in\mathbb{N}}$ be an orthonormal basis in $U$ and let $\left\{\beta_n\right\}$ be a sequence of independent real valued $\mathbf{F}$-Wiener processes. We define a cylindrical Wiener process on $U$ by the serie
	\begin{align*}
		W(t)=\sum_{n=0}^\infty\beta_n(t)e_n, \quad t\geq0,
	\end{align*}
	which converges in a Hilbert space $\tilde{U}$ containing $U$ with a Hilbert-Schmidt embedding.
	
	We consider the following problem with boundary white-noise condition
	\begin{align}\label{BWNC}
		\begin{cases}
			dX(t)=A_mX(t), &t\in[0,T],\\
			GX(t)=\dot{W}(t), &t\in[0,T],\\
			X(0)=X_0\in H.
		\end{cases}
	\end{align}
	
	Under the assumptions {\bf(A1)} and {\bf(A2)}, Da Prato and Zabczyk \cite{da1993stochastic} (see also \cite[Chap. 13]{da1996ergodic}) proved that the boundary problem \eqref{BWNC} is reformulated as the following stochastic Cauchy problem
	\begin{align*}
		{\bf(SCP)_{A,B}}
		\begin{cases}
			dX(t)=A_{-1}X(t)+BdW(t), &t\in[0,T],\\
			X(0)=X_0\in H,
		\end{cases}
	\end{align*}
	in $H_{-1}$, where $B\in\mathcal{L}(U,H_{-1})$ is the control operator associated with $A_m$ and $G$.
	
	It is well known, see \cite[Theorem 5.4]{da2014stochastic}, that ${\bf(SCP)_{A,B}}$ has a unique mild solution
	\begin{align*}
		X(t)=\mathbb{T}(t)X_0+\int_0^t\mathbb{T}_{-1}(t-s)BdW(s), \qquad t\in[0,T],
	\end{align*}
	in $H_{-1}$ if and only if  for some $T>0$
	\begin{align*}
		\int_0^T\left\|\mathbb{T}_{-1}(t)B\right\|_{\mathcal{L}_2(U,H_{-1})}^2dt<\infty.
	\end{align*}
	
	We mention that in the above formula $\mathbb{T}_{-1}$ acts on $H_{-1}$ and then the stochastic convolution  carried out in $H_{-1}$. Here, we are interested in $H$-valued solutions to the stochastic Cauchy problem ${\bf(SCP)_{A,B}}$. This motivates the following definition.
	\begin{defn}
		Assume that ${\bf(SCP)_{A,B}}$ has a mild solution $X$ in $H_{-1}$. We say that ${\bf(SCP)_{A,B}}$ has a mild solution if for each $t\in[0,T]$ the process $X(t)$ takes values in $H$.
	\end{defn}
	The weak solution of ${\bf(SCP)_{A,B}}$ can also be defined using the same terminology. In fact, $X$ is a mild solution of ${\bf(SCP)_{A,B}}$ if and only if it is a weak solution of ${\bf(SCP)_{A,B}}$. It is worth mentioning that if a solution exists, it is unique. For more details about this fact, see \cite[Proposition 2.4]{Vanneerven2013}. Therefore, to simplify the terminology, we will simply speak of a \textit{solution}.
	
	The following theorem from \cite{da1993stochastic} (in Hilbert spaces) and \cite[Proposition 2.4]{Vanneerven2013} (in Banach spaces) gives a necessary and sufficient condition for the existence and uniqueness of a solution for ${\bf(SCP)_{A,B}}$.
	\begin{thm}\label{TH32}
		The stochastic Cauchy problem ${\bf(SCP)_{A,B}}$ has a solution if and only if $\Phi_T$ is Hilbert-Schmidt from $\mathcal{U}_T$ to $H$.
	\end{thm}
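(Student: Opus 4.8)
The plan is to reduce the equivalence to the single time $t=T$ and then to match two standard ingredients: an identification of the law of the stochastic convolution as the image of the canonical cylindrical Gaussian measure on $\mathcal{U}_T$ under the input map $\Phi_T$, and the classical radonification principle that a bounded operator between separable Hilbert spaces maps the canonical cylindrical Gaussian measure to a genuine (Radon) Gaussian measure precisely when it is Hilbert--Schmidt.

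First I would reduce to $t=T$. A time-shift-and-zero-pad change of variables gives $\Phi_t=\Phi_T\circ S_{T-t}$, where $S_r\colon\mathcal{U}_t\to\mathcal{U}_T$, $(S_ru)(s)=\1_{[r,T]}(s)\,u(s-r)$, is an isometry; hence $\Phi_T\in\mathcal{L}_2(\mathcal{U}_T,H)$ forces $\Phi_t\in\mathcal{L}_2(\mathcal{U}_t,H)$ for every $t\in[0,T]$. Moreover, post-composing with the continuous embedding $H\hookrightarrow H_{-1}$ and using $\|\Phi_T\|_{\mathcal{L}_2(\mathcal{U}_T,H_{-1})}^2=\int_0^T\|\mathbb{T}_{-1}(s)B\|_{\mathcal{L}_2(U,H_{-1})}^2\,ds$ shows that $\Phi_T\in\mathcal{L}_2(\mathcal{U}_T,H)$ already guarantees existence of the $H_{-1}$-mild solution $X(t)=\mathbb{T}(t)X_0+W_A(t)$, with $W_A(t):=\int_0^t\mathbb{T}_{-1}(t-s)B\,dW(s)$. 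Since $X_0\in H$, the whole question collapses to: $W_A(T)$ takes values in $H$ almost surely if and only if $\Phi_T$ is Hilbert--Schmidt from $\mathcal{U}_T$ to $H$.

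Next I would pin down the law of $W_A(T)$. For each $\xi\in H_{-1}^\ast=H_1^d$ the scalar random variable $\langle W_A(T),\xi\rangle$ is centred Gaussian with variance $\int_0^T\|B^\ast\mathbb{T}_{-1}(T-s)^\ast\xi\|_U^2\,ds=\|\Phi_T^\ast\xi\|_{\mathcal{U}_T}^2$; these one-dimensional distributions determine the law of the $H_{-1}$-valued Gaussian vector $W_A(T)$, which is therefore the pushforward $(\Phi_T)_\ast\mu_c$ of the canonical cylindrical Gaussian measure $\mu_c$ on $\mathcal{U}_T$ by $\Phi_T$ (viewed as an operator into $H_{-1}$). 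If $\Phi_T\in\mathcal{L}_2(\mathcal{U}_T,H)$, the radonification theorem says $(\Phi_T)_\ast\mu_c$ is a Radon Gaussian measure carried by $H$, so $W_A(T)$, and by the reduction above each $W_A(t)$, is $H$-valued a.s.; this gives the ``if'' direction. Conversely, if $W_A(T)\in H$ a.s., then $\mathrm{law}(W_A(T))$ is a centred Gaussian Radon measure on $H_{-1}$ concentrated on the Borel subset $H$ (Borel by Lusin--Souslin); transporting it along the Borel isomorphism onto $H$ and checking that all marginals $\langle\cdot,\xi\rangle$ with $\xi$ ranging over the dense set $H_1^d\subset H$ are Gaussian (a limit in probability of Gaussians along $h_n\to h$ in $H$ remains Gaussian) shows it is a Radon Gaussian measure $\nu$ on $H$, whose covariance $Q\in\mathcal{L}(H)$ is automatically trace class. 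Writing $Q=Q^{1/2}Q^{1/2}$ with $Q^{1/2}\in\mathcal{L}_2(H)$ and using $\|Q^{1/2}\xi\|_H^2=\langle Q\xi,\xi\rangle_H=\|\Phi_T^\ast\xi\|_{\mathcal{U}_T}^2$ for $\xi\in H_1^d$, I would conclude that $\Phi_T^\ast$ extends from $H_1^d$ to a bounded operator $H\to\mathcal{U}_T$ with the same singular values as $Q^{1/2}$, hence Hilbert--Schmidt, and that its adjoint is exactly $\Phi_T$ regarded as an operator into $H$; thus $\Phi_T\in\mathcal{L}_2(\mathcal{U}_T,H)$.

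The hard part will be the measure-theoretic bridge: making rigorous that a Gaussian law on $H_{-1}$ which happens to be concentrated on the smaller, continuously embedded Hilbert space $H$ is genuinely a (trace-class) Gaussian law on $H$, and correctly identifying its covariance with $\Phi_T\Phi_T^\ast$. The time reduction, the embedding bookkeeping, and the variance computation are routine once the dual picture from Section \ref{section2} is in place (in particular the identity $\Phi_T^\ast=\Psi_T^d(T-\cdot)$ and ``$\Phi_T$ Hilbert--Schmidt $\iff\Phi_T^\ast$ Hilbert--Schmidt''), which also lets one phrase the whole argument in terms of the observation operator $B^\ast$ if preferred. Alternatively one may simply invoke the cited results of Da Prato--Zabczyk and van Neerven, of which the above is an outline.
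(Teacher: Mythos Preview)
The paper does not prove Theorem~\ref{TH32}; it merely quotes the result from Da~Prato--Zabczyk \cite{da1993stochastic} and Abreu--Haak--van~Neerven \cite{Vanneerven2013}. Your proposal therefore goes well beyond what the paper does: you sketch an actual proof via the radonification principle (a bounded operator between separable Hilbert spaces pushes the canonical cylindrical Gaussian to a Radon Gaussian iff it is Hilbert--Schmidt), together with the identification of the law of $W_A(T)$ as $(\Phi_T)_\ast\mu_c$ and the reduction to a single time $t=T$.

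This outline is essentially the argument behind the cited references (in the Hilbert setting, the radonification characterization is classical, and the covariance bookkeeping $\mathrm{Cov}(W_A(T))=\Phi_T\Phi_T^\ast$ is exactly how Da~Prato--Zabczyk proceed). Your final sentence already acknowledges this. One small remark: the measure-theoretic bridge you flag as the ``hard part'' --- that a Gaussian measure on $H_{-1}$ concentrated on $H$ is a bona fide Gaussian measure on $H$ with trace-class covariance --- is indeed the crux, and in the Hilbert setting it is handled cleanly by Fernique's theorem or by the equivalence between square-integrability of a Gaussian vector in $H$ and trace-class covariance; your approach via Lusin--Souslin and limits of Gaussians is correct but somewhat heavier than necessary. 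Since the paper treats the theorem as a black box, either invoking the references directly or giving your outline is acceptable.
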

	To characterize the existence of  solutions to ${\bf(SCP)_{A,B}}$ using the notion of admissibility introduced in Section \ref{section2}, we introduce the following definition.
	\begin{defn}\label{def3.3}
		Let $B\in\mathcal{L}(U,H_{-1})$ and $C\in\mathcal{L}(H_1,Y)$.
		\begin{enumerate}
			\item[(i)] The operator $B$ is said to be $\mathcal{S}$-admissible control operator for $A$ if the operator $\Phi_T$ has an extension to a Hilbert-Schmidt operator from $\mathcal{U}_T$ into $H$.
			\item[(ii)] The operator $C$ is said to be $\mathcal{S}$-admissible observation operator for $A$ if the operator $\Psi_T$ has an extension to a Hilbert-Schmidt operator from $H$ into $\mathcal{Y}_T$.
		\end{enumerate}
	\end{defn}
	Since every Hilbert-Schmidt operator is bounded, it follows that the $\mathcal{S}$-admissibility implies the admissibility. For the converse implication we have the following proposition.
	\begin{prop}\label{prop3}
		Suppose that $B\in\mathcal{L}(U,H_{-1})$ and $C\in\mathcal{L}(H_1,Y)$.
		\begin{enumerate}
			\item[(i)] $C$ is $\mathcal{S}$-admissible observation operator for $A$ if and only if $C$ is an admissible observation operator for $A$ and
			\begin{equation*}
				\gamma(T):=\int_0^T\|C_\Lambda \mathbb{T}(t)\|^2_2dt<\infty
			\end{equation*}
			for some $T>0$.
			\item[(ii)] $B$ is $\mathcal{S}$-admissible control operator for $A$ if and only if $B^*$ is $\mathcal{S}$-admissible observation operator for $A^*$.
			\item[(iii)] $B$ is $\mathcal{S}$-admissible control operator for $A$ if and only if $B^*$ is an admissible observation operator for $A^*$ and
			\begin{equation}\label{cond}
				\int_0^T\|B^*_\Lambda \mathbb{T}^*(t)\|^2_2dt<\infty
			\end{equation}
			for some $T>0$.
		\end{enumerate}
	\end{prop}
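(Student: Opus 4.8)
The plan is to prove part (i) first, then derive (ii) from the duality established in Section \ref{section2}, and finally obtain (iii) by combining (i) applied to $A^*$ with (ii). For part (i), the key identity is the Weiss representation $(\Psi_T x)(t) = C_\Lambda \mathbb{T}(t) x$ for all $x \in H$ and a.e.\ $t \in (0,T]$, valid whenever $C$ is admissible. Recall that a bounded operator $\Psi_T \in \mathcal{L}(H, \mathcal{Y}_T)$, i.e.\ with values in $L^2(0,T;Y)$, is Hilbert--Schmidt if and only if, picking an orthonormal basis $(h_k)$ of $H$, one has $\sum_k \|\Psi_T h_k\|^2_{\mathcal{Y}_T} < \infty$. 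By Tonelli, $\sum_k \|\Psi_T h_k\|^2_{\mathcal{Y}_T} = \sum_k \int_0^T \|(\Psi_T h_k)(t)\|_Y^2\, dt = \int_0^T \sum_k \|C_\Lambda \mathbb{T}(t) h_k\|_Y^2\, dt = \int_0^T \|C_\Lambda \mathbb{T}(t)\|_2^2\, dt = \gamma(T)$, where in the penultimate step I use that the operator $x \mapsto C_\Lambda \mathbb{T}(t) x$ is defined on all of $H$ (again by the Weiss theorem) and its Hilbert--Schmidt norm is basis-independent. Thus $\Psi_T$ is Hilbert--Schmidt precisely when $\gamma(T) < \infty$; since $\mathcal{S}$-admissibility of $C$ already presupposes that $\Psi_T$ extends boundedly — which is exactly admissibility of $C$ — the equivalence in (i) follows. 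One should remark that finiteness of $\gamma(T)$ for one $T > 0$ implies it for all $T > 0$, by the semigroup law and the standard bootstrapping argument: $\|C_\Lambda \mathbb{T}(t+s)\|_2 \le \|C_\Lambda \mathbb{T}(t)\|_2 \, \|\mathbb{T}(s)\|$.

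For part (ii), I invoke the duality recalled in Section \ref{section2}: for $x \in H$ the adjoint of the input map satisfies $(\Phi_T^* x)(t) = (\Psi_T^d x)(T-t)$, where $\Psi_T^d$ is the output map for the pair $(A^*, B^*)$. The map $t \mapsto T - t$ is a measure-preserving bijection of $[0,T]$, so it induces a unitary (hence isometric for both the operator and the Hilbert--Schmidt norm) automorphism of $\mathcal{U}_T = L^2(0,T;U)$ by time-reversal. Composing with it shows that $\Phi_T^* \in \mathcal{L}(H, \mathcal{U}_T)$ is Hilbert--Schmidt if and only if $\Psi_T^d \in \mathcal{L}(H, \mathcal{U}_T)$ is Hilbert--Schmidt. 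Finally, a bounded operator between Hilbert spaces is Hilbert--Schmidt if and only if its adjoint is, with equal Hilbert--Schmidt norms; hence $\Phi_T$ is Hilbert--Schmidt from $\mathcal{U}_T$ to $H$ if and only if $\Phi_T^*$ is Hilbert--Schmidt from $H$ to $\mathcal{U}_T$. Chaining these equivalences: $B$ is $\mathcal{S}$-admissible for $A$ $\iff$ $\Phi_T \in \mathcal{L}_2(\mathcal{U}_T, H)$ $\iff$ $\Phi_T^* \in \mathcal{L}_2(H, \mathcal{U}_T)$ $\iff$ $\Psi_T^d \in \mathcal{L}_2(H, \mathcal{U}_T)$ $\iff$ $B^*$ is $\mathcal{S}$-admissible observation for $A^*$, which is (ii). Part (iii) is then immediate: apply (i) with the pair $(A^*, B^*)$ in place of $(A, C)$ — so that $\mathbb{T}$ is replaced by $\mathbb{T}^*$ and $C_\Lambda$ by $B^*_\Lambda$ — to get that $B^*$ is $\mathcal{S}$-admissible observation for $A^*$ iff $B^*$ is admissible observation for $A^*$ and $\int_0^T \|B^*_\Lambda \mathbb{T}^*(t)\|_2^2\, dt < \infty$, and combine with (ii).

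The main obstacle, and the point requiring genuine care, is the interchange of summation and integration together with the justification that $\|\Psi_T\|_{\mathcal{L}_2}^2 = \gamma(T)$ as an \emph{equality} in $[0,\infty]$ — not merely an a.e.\ comparison. The subtlety is that the Weiss representation gives $(\Psi_T h_k)(t) = C_\Lambda \mathbb{T}(t) h_k$ only for a.e.\ $t$, with the null set a priori depending on $k$; since $H$ is separable we may take a common null set over a fixed countable dense family and pass to the closure, using that for fixed $t \notin$ (null set) the map $x \mapsto C_\Lambda \mathbb{T}(t) x$ is bounded on $H$ (this is part of Weiss's theorem). After that, Tonelli's theorem applies to the nonnegative integrand and delivers the equality unconditionally, so that one side is finite exactly when the other is. A secondary technical point is confirming that the time-reversal isometry on $\mathcal{U}_T$ is genuinely Hilbert--Schmidt-norm-preserving, but this is routine since it is unitary. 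Everything else is bookkeeping with the definitions of $\mathcal{S}$-admissibility and the standard facts about Hilbert--Schmidt operators.
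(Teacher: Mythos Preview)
Your proof is correct and follows essentially the same approach as the paper's: the computation $\|\Psi_T\|_2^2=\int_0^T\|C_\Lambda\mathbb{T}(t)\|_2^2\,dt$ via an orthonormal basis of $H$ for (i), and the duality $(\Phi_T^*x)(t)=B^*_\Lambda\mathbb{T}^*(T-t)x$ together with $\|\Phi_T\|_2=\|\Phi_T^*\|_2$ for (ii)--(iii). You are simply more explicit than the paper about Tonelli, the a.e.\ null-set issue in the Weiss representation, and the time-reversal unitary, but the underlying argument is identical.
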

	\begin{proof}
		If $C\in\mathcal{L}(H_1,Y)$ is $\mathcal{S}$-admissible, then $\Psi_T$ is linear bounded from $H$ to $\mathcal{Y}_T$. Therefore, $C$ is an admissible observation for $A$ and $(\Psi_T x)(t)=C_\Lambda \mathbb{T}(t)x$ for a.e $t\geq0$ and $x\in H$. Now, let $(e_k)_{k\in\mathbb{N}}$ be an orthonormal basis of $H$. Then, we have
		\begin{eqnarray*}
			\|\Psi_T\|^2_2&=& \sum_{k\in\mathbb{N}}\|\Psi_T e_k\|^2\\
			&=& \sum_{k\in\mathbb{N}}\int_0^T\|\left(\Psi_T e_k\right)(t)\|^2dt\\
			&=&\sum_{k\in\mathbb{N}}\int_0^T\|C_\Lambda \mathbb{T}(t)e_k\|^2dt\\
			&=& \int_0^T \|C_\Lambda\mathbb{T}(t)\|^2_2dt.
		\end{eqnarray*}
		Thus (i) is satisfied. Moreover, it is shown in \cite[Theorem 4.4.3]{tucsnak2009observation} that $B$ is an admissible control operator for $A$ if and only if $B^*$ is an admissible observation operator for $A^*$. In this case, we have
		\begin{equation*}
			\left(\Phi_T^*x\right)(t)=B_\Lambda^*\mathbb T^*\left(T-t\right)x,
		\end{equation*}
		for every $x\in H$ and a.e $t\in[0,T]$. Thus,
		\begin{equation*}
			\int_0^T\|B^*_\Lambda \mathbb{T}^*(t)\|^2_2dt=\|\Phi_T^*\|_2^2.
		\end{equation*}
		Consequently, (ii) and (iii) follow from (i).
	\end{proof}
	It follows from (iii) of the above proposition that the stochastic Cauchy problem  ${\bf(SCP)_{A,B}}$  has solution if and only if \eqref{conpro1}  has a solution and the condition \eqref{cond} holds.
	\begin{rmk}\label{rem1}
		Let $C$ be an $\mathcal{S}$-admissible observation operator for $A$ and let $(\mathbb{T}(t))_{t\geq0}$ be exponentially stable, then
		$$\int_0^\infty\|C_\Lambda \mathbb{T}(t)\|^2_2dt<\infty.$$
		In fact, we choose $t_0>0$ large enough such that $\|\mathbb T(t_0)\|<1$. Then,
		\begin{eqnarray*}
			\int_0^\infty\|C_\Lambda \mathbb{T}(t)\|^2_2dt&=&\sum_{k=0}^{\infty}\int_{kt_0}^{(k+1)t_0}\|C_\Lambda \mathbb{T}(t)\|_2^2dt\\
			&=&\sum_{k=0}^{\infty} \int_0^{t_0}\|C_\Lambda \mathbb{T}(t)\mathbb T(kt_0)\|_2^2dt\\
			&\leq&\gamma(t_0)\sum_{k=0}^{\infty} \|\mathbb T(t_0)\|^{2k}<\infty.
		\end{eqnarray*}
	\end{rmk}
	
	\begin{prop}\label{Weiss-con}
		Assume that $C\in\mathcal{L}(H_1,Y)$ is $\mathcal{S}$-admissible observation operator for $A$. Then for every $\omega>\omega_0(\mathbb{T})$ there exists $K_\omega\geq 0$ such that
		\begin{equation}\label{con}
			\|CR(\lambda,A)\|_2\leq \frac{K_\omega}{\sqrt{Re(\lambda)-\omega}}, \qquad \forall \lambda\in \mathbb{C}_\omega.
		\end{equation}
	\end{prop}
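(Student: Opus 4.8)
The plan is to transform the claimed resolvent bound into a statement about the output maps of $(A,C)$ and then use the $\mathcal{S}$-admissibility hypothesis directly. First I would recall the standard Laplace-transform identity linking the resolvent and the semigroup: for $\lambda\in\mathbb{C}_\omega$ with $\omega>\omega_0(\mathbb{T})$ and any $x\in H$, one has $R(\lambda,A)x=\int_0^\infty e^{-\lambda t}\mathbb{T}(t)x\,dt$, and after applying $C_\Lambda$ (which is legitimate since $\mathrm{Ran}(\mathbb{T}(t))\subset D(C_\Lambda)$ by admissibility and the Weiss representation theorem recalled at the end of Section \ref{section2}) this gives $C_\Lambda R(\lambda,A)x=\int_0^\infty e^{-\lambda t}C_\Lambda\mathbb{T}(t)x\,dt$. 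In terms of Hilbert--Schmidt norms, writing $(e_k)$ for an orthonormal basis of $H$, this means the $\mathcal{L}_2(H,Y)$-valued function $t\mapsto C_\Lambda\mathbb{T}(t)$ has Laplace transform $C_\Lambda R(\lambda,A)$.

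Next I would fix $\omega>\omega_0(\mathbb{T})$ and choose $M\geq1$ so that $\|\mathbb{T}(t)\|\leq Me^{\omega t}$ for all $t\geq0$; the key point is that the rescaled semigroup $e^{-\omega t}\mathbb{T}(t)$ is bounded, so by a geometric-series argument exactly like the one in Remark \ref{rem1} the $\mathcal{S}$-admissibility of $C$ yields a finite constant — call it $K_\omega^2$ up to a multiplicative factor — with $\int_0^\infty e^{-2\omega t}\|C_\Lambda\mathbb{T}(t)\|_2^2\,dt<\infty$. Concretely, split $[0,\infty)=\bigcup_k[kt_0,(k+1)t_0)$, use the cocycle identity $\mathbb{T}(t)=\mathbb{T}(t-kt_0)\mathbb{T}(kt_0)$ and submultiplicativity of the Hilbert--Schmidt norm under composition with a bounded operator to bound each block integral by $\gamma(t_0)\|e^{-\omega t_0}\mathbb{T}(t_0)\|^{2k}$ up to constants, and sum; here $t_0$ is taken large enough that $\|e^{-\omega t_0}\mathbb{T}(t_0)\|<1$, which is possible precisely because $\omega>\omega_0(\mathbb{T})$.

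Then, for $\lambda\in\mathbb{C}_\omega$, I would estimate the Hilbert--Schmidt norm of $C_\Lambda R(\lambda,A)=\int_0^\infty e^{-\lambda t}C_\Lambda\mathbb{T}(t)\,dt$ by Minkowski's integral inequality for the $\mathcal{L}_2(H,Y)$-norm followed by Cauchy--Schwarz:
\begin{align*}
\|C_\Lambda R(\lambda,A)\|_2
&\leq \int_0^\infty e^{-\mathrm{Re}(\lambda)t}\|C_\Lambda\mathbb{T}(t)\|_2\,dt\\
&= \int_0^\infty e^{-(\mathrm{Re}(\lambda)-\omega)t}\,e^{-\omega t}\|C_\Lambda\mathbb{T}(t)\|_2\,dt\\
&\leq \left(\int_0^\infty e^{-2(\mathrm{Re}(\lambda)-\omega)t}\,dt\right)^{1/2}\left(\int_0^\infty e^{-2\omega t}\|C_\Lambda\mathbb{T}(t)\|_2^2\,dt\right)^{1/2}\\
&= \frac{1}{\sqrt{2(\mathrm{Re}(\lambda)-\omega)}}\left(\int_0^\infty e^{-2\omega t}\|C_\Lambda\mathbb{T}(t)\|_2^2\,dt\right)^{1/2},
\end{align*}
which is exactly the desired bound \eqref{con} with $K_\omega:=\tfrac{1}{\sqrt2}\left(\int_0^\infty e^{-2\omega t}\|C_\Lambda\mathbb{T}(t)\|_2^2\,dt\right)^{1/2}$, and since $CR(\lambda,A)=C_\Lambda R(\lambda,A)$ as operators into $Y$ (the range of $R(\lambda,A)$ lies in $H_1\subset D(C_\Lambda)$ where $C_\Lambda$ restricts to $C$), the statement follows. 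The main obstacle I anticipate is the careful justification that the Laplace-transform formula commutes with $C_\Lambda$ and with the Hilbert--Schmidt norm (i.e. that the Bochner integral defining $R(\lambda,A)x$ can be moved inside $C_\Lambda$ and that Minkowski's inequality applies in the $\mathcal{L}_2$-norm); this requires knowing a priori that $t\mapsto e^{-\omega t}\|C_\Lambda\mathbb{T}(t)\|_2$ is in $L^1$, which is why the geometric-series estimate from the second paragraph must be established first, and then everything else is bookkeeping.
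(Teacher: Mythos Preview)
Your proposal is correct and follows essentially the same route as the paper's proof: both arguments express $CR(\lambda,A)$ as the Laplace transform of $t\mapsto C_\Lambda\mathbb{T}(t)$, split the exponential as $e^{-(\mathrm{Re}\lambda-\omega)t}\cdot e^{-\omega t}$, apply Cauchy--Schwarz, and then invoke the geometric-series estimate of Remark~\ref{rem1} (applied to the rescaled semigroup $e^{-\omega t}\mathbb{T}(t)$) to bound the resulting weighted $L^2$-integral of $\|C_\Lambda\mathbb{T}(t)\|_2$. The only cosmetic difference is that the paper expands the Hilbert--Schmidt norm along an orthonormal basis $(e_k)$ of $H$ and applies the triangle inequality and Cauchy--Schwarz to each summand separately, whereas you work directly in the Banach space $\mathcal{L}_2(H,Y)$ via Minkowski's integral inequality; both paths produce the identical bound $K_\omega^2=\tfrac12\int_0^\infty e^{-2\omega t}\|C_\Lambda\mathbb{T}(t)\|_2^2\,dt$.
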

	\begin{proof}
		Let $\lambda\in \mathbb{C}$ with $Re(\lambda)>\omega_0(\mathbb{T})$. We choose $\omega\in(\omega_0(\mathbb{T}),Re(\lambda))$ and set $\epsilon= Re(\lambda)-\omega$. Let $(e_k)_{k\in \mathbb{N}}$ be an orthonormal basis of $H$. It follows from \cite[Theorem 4.3.7]{tucsnak2009observation} that the Laplace transform of $\Psi_Tz$ exists at $\lambda$ for any $z\in H$ and it is given by $\widehat{(\Psi_T z)}(\lambda)=CR(\lambda,A)z$. Then, we have
		\begin{eqnarray*}
			\|CR(\lambda,A)\|_2^2&=& \sum_{k=1}^\infty\|CR(\lambda,A)e_k\|^2\\
			&=& \sum_{k=1}^\infty \left\|\int_0^\infty e^{-\lambda t}C_\Lambda \mathbb{T}(t)e_kdt\right\|^2\\
			&\leq& \sum_{k=1}^\infty \left(\int_0^\infty |e^{-\epsilon t}|\cdot\|e^{-\omega t}C_\Lambda\mathbb{T}(t)e_k\|dt\right)^2.
		\end{eqnarray*}
		It follows from \cite[Proposition 4.3.6]{tucsnak2009observation}, that  $t\mapsto e^{-\omega t}C_\Lambda \mathbb{T}(t)e_k\in L^2(0,+\infty;Y)$. Then, the Cauchy-Schwarz inequality implies
		\begin{eqnarray*}
			\|CR(\lambda,A)\|_2^2&\leq \int_0^\infty |e^{-\epsilon t}|^2dt\cdot \sum_{k=1}^\infty \int_0^\infty \|e^{-\omega t}C_\Lambda \mathbb{T}(t)e_k\|^2dt.
		\end{eqnarray*}
		Consequently, Remark \ref{rem1} implies that there exists $K_\omega\geq0$ such that
		\begin{equation*}
			\|CR(\lambda,A)\|_2\leq \frac{K_\omega}{\sqrt\epsilon}= \frac{K_\omega}{\sqrt{Re(\lambda)-\omega}}.
		\end{equation*}
	\end{proof}
	\begin{rmk}
		The converse implication of the above proposition is known as the stochastic Weiss conjecture:
		\begin{enumerate}
			\item[(i)] 	It is shown in \cite[Theorem 4.5]{Jacob2003}, that if $\mathbb{T}$ is a contraction semigroup on $H$, $Y$ is a Hilbert space and $C\in\mathcal{L}(H_1,Y)$ satisfies the estimate \eqref{con} for every $\lambda\in \mathbb{C}_\omega$ then $C$ is an admissible observation operator for $A$. We ask if $C$ can be an $\mathcal{S}$-admissible observation operator for $A$.
			\item[(ii)] A dual version of the estimate \eqref{con} was considered in \cite{Vanneerven2013}.  In fact, assuming that $-A$ is sectorial, injective with dense range and admits a bounded $H^\infty$-calculus of angle less than $\frac{\pi}{2}$ on $H$, the authors showed that if for any $\lambda>0$,
			\begin{align*}
				\|R(\lambda,A_{-1})B\|_2<\infty\quad \text{and}\quad \sum_{n\in\mathbb{Z}}2^n\|R(2^n,A_{-1})B\|^2_2<\infty,
			\end{align*}
			then $B$ is an  $\mathcal{S}$-``infinite" admissible control operator for $A$, that is, the operator $\widetilde{\Phi}$ defined by 
			$$\widetilde{\Phi}u:=\lim_{T\to \infty}\int_0^T \mathbb{T}_{-1}(t)Bu(t)\,dt,\quad u\in L^2(0,\infty ;U),$$
			exists in $H$ and it is  Hilbert-Schmidt from $L^2(0,+\infty;U)$ to $H$. 
		
		\end{enumerate}
	\end{rmk}
	Now, we give a criterion of the $\mathcal{S}$-admissibility of $B$ in terms of the Dirichlet operator.
	\begin{prop}\label{car_adm}
		Let $\omega >\omega_0(\mathbb{T})$.  Then we have the following:
		\begin{enumerate}
			\item[(i)] $C$ is $\mathcal{S}$-admissible for $A$ if and only if
			\begin{equation}\label{estim}
				\sum_{n\in\mathbb{Z}}\left\|CR\left(\omega+\frac{2\pi in}{T},A\right)\right\|^2_2<\infty.
			\end{equation}
			\item[(ii)] $B$ is $\mathcal{S}$-admissible for $A$ if and only if
			\begin{equation*}
				\sum_{n\in\mathbb{Z}}\left\|\mathbb{D}_{\omega+\frac{2\pi in}{T}}\right\|^2_2<\infty.
			\end{equation*}
		\end{enumerate}
	\end{prop}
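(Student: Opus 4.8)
The plan is to reduce both parts, via Proposition~\ref{prop3} and a duality argument, to an identity relating the Hilbert--Schmidt norm of the finite-horizon output map of $(A,C)$ to a sampling of the resolvent of $A$ along the vertical line $\{\mathrm{Re}\,\lambda=\omega\}$. Fix $\omega>\omega_0(\mathbb T)$ and set $\lambda_n:=\omega+\frac{2\pi in}{T}\in\rho(A)$ for $n\in\mathbb Z$. Since the spectral radius of $e^{-\omega T}\mathbb T(T)$ equals $e^{(\omega_0(\mathbb T)-\omega)T}<1$, the operator
\[
L:=\sum_{j\ge0}e^{-\omega jT}\mathbb T(jT)=\bigl(I-e^{-\omega T}\mathbb T(T)\bigr)^{-1}\in\mathcal L(H)
\]
is bounded with bounded inverse $L^{-1}=I-e^{-\omega T}\mathbb T(T)$, and $L,L^{-1}$ leave $D(A)$ invariant. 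The crucial step is the identity
\begin{equation}\tag{$\ast$}
\int_0^T e^{-2\omega s}\,\|C\mathbb T(s)Lz\|^2\,ds=\frac1T\sum_{n\in\mathbb Z}\|CR(\lambda_n,A)z\|^2,\qquad z\in D(A).
\end{equation}
To obtain it I would use that, for $z\in D(A)$, the map $s\mapsto C\mathbb T(s)z$ is continuous and $\int_0^\infty e^{-\lambda s}C\mathbb T(s)z\,ds=CR(\lambda,A)z$ for $\mathrm{Re}\,\lambda>\omega_0(\mathbb T)$; splitting $[0,\infty)=\bigcup_{j\ge0}[jT,(j+1)T)$, substituting $s\mapsto s+jT$ and using $e^{-\lambda_n jT}=e^{-\omega jT}$, the absolutely convergent rearrangement gives $CR(\lambda_n,A)z=\int_0^T e^{-\lambda_n s}C\mathbb T(s)Lz\,ds$. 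Thus $\tfrac1T CR(\lambda_n,A)z$ is the $n$-th Fourier coefficient on $[0,T]$ of $s\mapsto e^{-\omega s}C\mathbb T(s)Lz$, and $(\ast)$ follows from Parseval's identity.

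For (i), first suppose the left-hand side of \eqref{estim} is finite. Feeding $\|CR(\lambda_n,A)z\|\le\|CR(\lambda_n,A)\|_2\|z\|$ into $(\ast)$ and using that $e^{-2\omega s}$ is bounded away from $0$ on $[0,T]$, one gets $\int_0^T\|C\mathbb T(s)Lz\|^2ds\le c\|z\|^2$ for $z\in D(A)$; replacing $z$ by $L^{-1}z\in D(A)$ shows that $C$ is an admissible observation operator for $A$. Then, by the representation theorem of Weiss, $(\Psi_Tx)(t)=C_\Lambda\mathbb T(t)x$ for a.e.\ $t$ and all $x\in H$, so $(\ast)$ extends to every $z\in H$ with $C\mathbb T(s)$ replaced by $C_\Lambda\mathbb T(s)$ (via density and Parseval, both sides being continuous in $z$ and agreeing on $D(A)$). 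Summing over an orthonormal basis of $H$ (monotone convergence) yields
\[
\int_0^T e^{-2\omega s}\,\|C_\Lambda\mathbb T(s)L\|_2^2\,ds=\frac1T\sum_{n\in\mathbb Z}\|CR(\lambda_n,A)\|_2^2<\infty,
\]
and since $L,L^{-1}$ are bounded and $e^{\pm2\omega s}$ is bounded on $[0,T]$ this forces $\int_0^T\|C_\Lambda\mathbb T(s)\|_2^2\,ds<\infty$, whence $C$ is $\mathcal S$-admissible by Proposition~\ref{prop3}(i). Conversely, if $C$ is $\mathcal S$-admissible then Proposition~\ref{prop3}(i) gives $\int_0^T\|C_\Lambda\mathbb T(s)\|_2^2\,ds<\infty$, hence $\int_0^T e^{-2\omega s}\|C_\Lambda\mathbb T(s)L\|_2^2\,ds<\infty$, and the orthonormal-basis version of $(\ast)$ delivers \eqref{estim}.

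Part (ii) I would derive from (i) by duality. By Proposition~\ref{prop3}(ii), $B$ is $\mathcal S$-admissible for $A$ if and only if $B^*\in\mathcal L(H_1^d,U)$ is $\mathcal S$-admissible for $A^*$; applying (i) to $C=B^*$ and the semigroup $\mathbb T^*$ (with the same growth bound $\omega_0(\mathbb T)$) this amounts to $\sum_{n\in\mathbb Z}\|B^*R(\lambda_n,A^*)\|_2^2<\infty$. Passing to adjoints (which preserve Hilbert--Schmidt norms) and using $\bigl(R(\lambda_n,A^*)\bigr)^*=R(\overline{\lambda_n},A_{-1})$ together with $R(\mu,A_{-1})B=\mathbb D_\mu$ (immediate from $Bu=(\mu-A_{-1})\mathbb D_\mu u$), one finds $\|B^*R(\lambda_n,A^*)\|_2=\|R(\overline{\lambda_n},A_{-1})B\|_2=\|\mathbb D_{\overline{\lambda_n}}\|_2$; since $\overline{\lambda_n}=\omega+\frac{2\pi i(-n)}{T}$, the sum over $n\in\mathbb Z$ is unchanged and equals $\sum_{n\in\mathbb Z}\|\mathbb D_{\omega+\frac{2\pi in}{T}}\|_2^2$, which proves (ii).

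The hard part will be the identity $(\ast)$: one must realize the finite-horizon output map of $(A,C)$ as a Fourier series on $[0,T]$, make the block decomposition of the Laplace integral rigorous on $D(A)$, and—above all—introduce the normalizing operator $L=(I-e^{-\omega T}\mathbb T(T))^{-1}$, which is what makes the truncated transform reproduce the full resolvent $R(\lambda_n,A)$. Once $(\ast)$ is in place, everything else is routine manipulation of Hilbert--Schmidt norms, Proposition~\ref{prop3}, and the standard control/observation duality.
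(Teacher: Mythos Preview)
Your proposal is correct and follows essentially the same route as the paper. The paper works with $J:=I-e^{-\omega T}\mathbb T(T)$ and computes directly $\|\Psi_T^\omega\|_2^2=\sum_{n\in\mathbb Z}\|CR(\lambda_n,A)J\|_2^2$ via Parseval with respect to the tensor basis $(e^{-\frac{2\pi in}{T}\cdot}\otimes h_j)$ of $L^2(0,T;Y)$, then invokes the invertibility of $J$; your identity $(\ast)$ is the pointwise version of the same computation with $L=J^{-1}$ placed on the other side, and your duality argument for (ii) (using $\mathbb D_\mu=R(\mu,A_{-1})B$ and $\overline{\lambda_n}=\lambda_{-n}$) is exactly what the paper's one-line ``(ii) follows from Proposition~\ref{prop3} and (i)'' unpacks to. The only cosmetic difference is that you separate out the preliminary step of deducing plain admissibility of $C$ from finiteness of the sum before extending $(\ast)$ to all of $H$, which the paper leaves implicit.
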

	\begin{proof}
		(i) Let $\omega>\omega_0(\mathbb{T})$. Define the operator $A_\omega:=A-\omega$ with domain $D(A_\omega)=D(A)$ and $\Psi_T^\omega$  be the output map  associated to  $(A_\omega,C)$. Now, let $(e_k)_{k\in\mathbb{N}}$ and $(h_j)_{j\in\mathbb{N}}$ be an orthonormal basis of $H$ and $Y$, respectively. Choose $(f_n:=e^{-\frac{2\pi n i}{T}(\cdot)})_{n\in\mathbb{Z}}$ as an orthonormal basis of $L^2(0,T)$. Then, $(e^{-\frac{2\pi n i}{T}(\cdot)}\otimes h_j)_{n\in\mathbb{Z}, j\in \mathbb{N}}$ is an orthonormal basis of $L^2(0,T;Y)$. Using Parseval's identity, we have
		\begin{align*}
			\left\|\Psi_T^\omega\right\|^2_2&= \sum_{k\in\mathbb{N}} \|\Psi_T^\omega e_k\|^2\\
			&= \sum_{k,j\in \mathbb{N}} \sum_{n\in\mathbb{Z}} \left\langle\Psi_T^\omega e_k, f_n\otimes h_j\right\rangle_{\mathcal{Y}_T}^2\\
			&=\sum_{k,j\in \mathbb{N}} \sum_{n\in\mathbb{Z}} \left(\int_0^T\langle(\Psi_T^\omega e_k)(t),\left(f_n\otimes h_j\right)(t)\rangle_Y dt\right)^2\\
			&=\sum_{k,j\in \mathbb{N}} \sum_{n\in\mathbb{Z}} \left(\int_0^T\langle e^{-\frac{2\pi ni}{T}t}(\Psi_T^\omega e_k)(t), h_j\rangle_Y dt\right)^2.
		\end{align*}
		Using the fact that $\left(\Psi_T^\omega e_k\right)(t)=e^{-\omega t}\left(\Psi_T e_k\right)(t)$, we have
		\begin{align*}
			\int_0^Te^{-\frac{2\pi n i}{T}t}\left(\Psi_T^\omega e_k\right)(t)dt&= \int_0^Te^{-\left(\omega+\frac{2\pi ni}{T}\right)t}\left(\Psi_T e_k\right)(t)dt\\
			&=CR\left(\omega+\frac{2\pi ni}{T},A\right)Je_k,
		\end{align*}
		where $J:=Id-e^{-\omega T}\mathbb{T}(T)$. Thus,
		\begin{align*}
			\left\|\Psi_T^\omega\right\|^2_2&= \sum_{k,j\in \mathbb{N}} \sum_{n\in\mathbb{Z}} \left(\left\langle CR\left(\omega+\frac{2\pi ni}{T},A\right)J e_k,h_j\right\rangle_Y \right)^2\\
			&= \sum_{n\in\mathbb{Z}} \left\| CR\left(\omega+\frac{2\pi ni}{T},A\right)J\right\|^2_2.
		\end{align*}
		Taking into account that the operator $J$ is invertible, since the spectral radius of $e^{-\omega T}\mathbb{T}(T)$ is less than one, it follows that \eqref{estim} holds if and only if $\Psi_T^\omega$ is Hilbert-Schmidt. This happens if and only if $\Psi_T$ is Hilbert-Schmidt. \\
		(ii) follows from Proposition \ref{prop3} and (i).
	\end{proof}
	
	(ii) of the above proposition is an extension of the result in \cite[Corollary 7.4]{Nerveen}  (where the case $B\in\mathcal{L}(U,H)$ was considered) in the Hilbert setting.
	
	\begin{ex}
		Let $U$ be a separable Hilbert space. We consider the following transport equation with boundary-noise
		\begin{align}\label{trans}
			\begin{cases}
				\frac{\partial X(t,\theta)}{\partial t}=\frac{\partial X(t,\theta)}{\partial \theta},&\theta\in[-r,0],\;\;t\in[0,T],\\
				X(0,\theta)=\varphi(\theta),&\theta\in[-r,0],\\
				X(t,0)=\dot{W}(t), &t\in[0,T],
			\end{cases}
		\end{align}
		where $\varphi\in H:=L^2([-r,0];U)$ and $(W(t))_{t\in[0,T]}$ is a cylindrical Wiener process over $U$.
		
		On the space $H$, we define the operators
		\begin{align*}
			&Q_m\varphi=\partial_\theta\varphi, \qquad \varphi\in D(Q_m)=W^{1,2}([-r,0];U),\\
			&G\varphi=\varphi(0), \qquad \varphi\in W^{1,2}([-r,0];U).
		\end{align*}
		
		Thus, the problem \eqref{trans} is reformulated in $H$ as the following
		\begin{align*}
			\begin{cases}
				\dot{X}(t)=Q_mX(t), & t\in[0,T],\\
				X(0)=\varphi,\\
				GX(t)=\dot W(t).
			\end{cases}
		\end{align*}
		It is clear that the operator $G:D(Q_m)\to U$ is surjective. Moreover, it is well known that the operator
		\begin{align*}
			Q:=Q_m, \qquad D(Q):=\left\{\varphi\in D(Q_m),\quad \varphi(0)=0\right\},
		\end{align*}
		generates the left shift semigroup $(S(t))_{t\geq0}$ on $H$ defined by
		\begin{align*}
			(S(t)\varphi)(\theta)=\begin{cases}
				0,& -t\leq\theta\leq0,\\
				\varphi(t+\theta), & -r\leq \theta<-t.
			\end{cases}
		\end{align*}
		By a simple calculations, the Dirichlet operator associated with $Q_m$ and $G$ is $\mathbb{D}_\lambda:U\to H$ given by
		\begin{align*}
			(\mathbb{D}_\lambda v)(\theta)=e^{\lambda\theta}v,\qquad \theta\in[-r,0],\;\;v\in U, \;\;\lambda\in\mathbb{C}.
		\end{align*}
		Thus, the control operator $B$ associated with $Q_m$ and $G$ is given by
		\begin{align*}
			B=\left(\lambda-Q_{-1}\right)\mathbb{D}_\lambda\in\mathcal{L}(U,H_{-1}), \qquad \lambda\in \mathbb{C}.
		\end{align*}
		Now, let $(u_k)_{k\in\mathbb{N}}$ be an orthonormal basis of $U$. Then, for any $\omega>\omega_0(S)$,
		\begin{align*}
			\sum_{n\in\mathbb{Z}}\left\|\mathbb{D}_{\omega+\frac{2\pi in}{T}}\right\|^2_2&=	\sum_{n\in\mathbb{Z}}	\sum_{k\in\mathbb{N}}\displaystyle\int_{-r}^0\left\|\left(\mathbb{D}_{\omega+\frac{2\pi in}{T}}u_k\right)(\theta)\right\|^2d\theta\\
			&=	\sum_{n\in\mathbb{Z}}	\sum_{k\in\mathbb{N}}\displaystyle\int_{-r}^0\left\|e^{\left(\omega+\frac{2\pi ni}{T}\right)\theta}u_k\right\|^2d\theta\\
			&=	\sum_{n\in\mathbb{Z}}	\sum_{k\in\mathbb{N}}\displaystyle\int_{-r}^0e^{2\omega\theta}d\theta=+\infty.
		\end{align*}
		It follows from Proposition \ref{car_adm}, that  $B$ is not $\mathcal{S}$-admissible for $Q$. Thus, the white-noise boundary problem \eqref{trans} does not have a solution in $H$.
	\end{ex}
	
	\section{Desch-Schappacher perturbation} \label{section4}
	In this section, we are interested in studying an unbounded perturbation result that concerns the existence of a solution for the perturbed problem.
	
	Let $\mathscr{P}\in\mathcal{L}(H,H_{-1})$ be an admissible control operator for $A$. Then, the operator
	\begin{align*}
		\mathscr{A}:=A_{-1}+\mathscr{P}, \qquad D(\mathscr{A})=\left\{x\in H,\quad (A_{-1}+\mathscr{P})x\in H\right\},
	\end{align*}
	generates a strongly continuous semigroup $\mathscr{T}:=\left(\mathscr{T}(t)\right)_{t\geq0}$ on $H$ given by
	\begin{align*}
		\mathscr{T}(t)x=\mathbb{T}(t)x+\int_0^t\mathbb{T}_{-1} (t-s)\mathscr{P}\mathscr{T}(s)xds
	\end{align*}
	for any $x\in H$. The operator $\mathscr{P}\in\mathcal{L}(H,H_{-1})$ is called a Desch-Schappacher perturbation for $A$,  see \cite{desch} and \cite[Chap. III, Corollary 3.4]{nagel}.
	
	For $\mathscr{P}\in\mathcal{L}(H,H_{-1})$, we shall consider  the perturbed stochastic Cauchy problem
	\begin{align*}
		\bf	(SCP)_{\mathscr{A},B}\;
		\begin{cases}
			dX(t)=\mathscr{A}X(t)dt+BdW(t), &t\in[0,T],\\
			X(0)=X_0.
		\end{cases}
	\end{align*}
	It follows from Theorem \ref{TH32}, that $\bf (SCP)_{\mathscr{A},B}$ has a solution if and only if the input-map
	\begin{align*}
		\Phi_T^\mathcal{\mathscr{P}}u=\int_0^T\mathscr{T}_{-1}(T-s)Bdu(s),\quad u\in\mathcal{U}_T,
	\end{align*}
	associated with $(\mathscr{A},B)$ is Hilbert-Schmidt from $\mathcal{U}_T$ to $H$. It is not immediately apparent how to establish the Hilbert-Schmidt property for $\Phi_T^\mathcal{\mathscr{P}}$ through a direct approach. This is due to the lack of a simple expression for the semigroup $\mathscr{T}_{-1}$. However, we can overcome this problem by using a dual argument. Specifically, we will rely on the characterization of the $\calS$-admissible control operators developed in the previous section.

	Let $P\in\mathcal{L}(H_1,H)$ be an admissible observation operator for $A$. It is well-known that the perturbed operator $\mathcal{A}:=A+P$ with domain $D(\mathcal{A})=D(A)$ generates a strongly continuous semigroup $\mathbb{S}:=(S(t))_{t\geq0}$ on $H$, see \cite{hadd2005unbounded}, see also \cite[Theorem 5.4.2]{tucsnak2009observation}. The following result shows the invariance of $\mathcal{S}$-admissibility for $A$ under the unbounded perturbation $P$.
	
	\begin{thm}\label{3.8}
		Let $C\in\mathcal{L}(H_1,Y)$ be an $\mathcal{S}$-admissible observation operator for $A$. Assume that $P\in\mathcal{L}(H_1,H)$ is an admissible observation operator for $A$. Then, $C$ is $\mathcal{S}$-admissible observation operator for $\mathcal{A}$.
	\end{thm}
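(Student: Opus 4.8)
The plan is to reduce the claim to a finite-time Hilbert--Schmidt estimate that transfers from $A$ to $\calA=A+P$ via the perturbation formula for the output maps, using the characterization of $\calS$-admissibility in Proposition \ref{prop3}(i). Since $P$ is an admissible observation operator for $A$, it is classical (see \cite{hadd2005unbounded} or \cite[Theorem 5.4.2]{tucsnak2009observation}) that the output maps $\Psi_T^{\calA}$ associated with $(\calA,C)$ satisfy a variation-of-parameters identity of the form
\begin{align*}
\Psi_T^{\calA} = \Psi_T + \Psi_T * (\text{something built from } \Psi_T^P \text{ and } \mathbb{S}),
\end{align*}
or, more precisely, on $D(A)$ one has $C_\Lambda \mathbb{S}(t)x = C_\Lambda \mathbb{T}(t)x + \int_0^t C_\Lambda \mathbb{T}(t-s) P_\Lambda \mathbb{S}(s)x\,ds$, where the inner integral is understood through admissibility of $C$ and $P$ for $A$. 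First I would make this identity rigorous at the level of the Yosida extensions $C_\Lambda$ and $P_\Lambda$ for $A$, noting that $C$ is admissible for $\calA$ (a known consequence of admissibility of both $C$ and $P$ for $A$, since $\mathbb{S}$ differs from $\mathbb{T}$ by an admissible-type perturbation), so $(\Psi_T^{\calA}x)(t) = C_\Lambda^{\calA}\mathbb{S}(t)x$ a.e.; a short argument identifies $C_\Lambda^{\calA}$ with $C_\Lambda$ on the relevant domain.

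Next I would exploit that $\calS$-admissibility of $C$ for $A$ means $\int_0^T \|C_\Lambda \mathbb{T}(t)\|_2^2\,dt < \infty$ by Proposition \ref{prop3}(i). The goal is $\int_0^T \|C_\Lambda^{\calA}\mathbb{S}(t)\|_2^2\,dt < \infty$ together with ordinary admissibility of $C$ for $\calA$; by Proposition \ref{prop3}(i) applied to $\calA$ this is exactly $\calS$-admissibility of $C$ for $\calA$. Writing the perturbation identity in the convolution form $\Psi_T^{\calA} = \Psi_T + \mathscr{F}\circ(\text{input map for } P)\circ\cdots$, I would estimate the Hilbert--Schmidt norm of the perturbation term using: (a) $\Psi_T \in \mathcal{L}_2(H,\mathcal{Y}_T)$ from the hypothesis; (b) the output map for $(\calA,P)$, call it $\Psi_T^{P,\calA}$, is bounded $H\to L^2(0,T;H)$ because $P$ is admissible for $\calA$; and (c) the convolution (Laplace-type) structure of the perturbation series. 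The cleanest route is to set up a Gronwall/Picard iteration: define $\Psi^{(0)} = \Psi_T$, $\Psi^{(k+1)} = \Psi_T \diamond \Psi^{(k)}$ where $\diamond$ is the causal convolution built from the admissibility pairing, show each $\Psi^{(k)}$ is Hilbert--Schmidt with $\|\Psi^{(k)}\|_2 \le \|\Psi_T\|_2 \cdot (Ct)^k/k!$ on $[0,t]$, and sum. The key point making the Hilbert--Schmidt norm (not merely the operator norm) propagate is that the outermost factor in each iterate is $C_\Lambda$, so one can pull the $\mathcal{L}_2$-norm out of the convolution against operator-norm-bounded (admissible) factors via Minkowski's integral inequality: $\|\int_0^t C_\Lambda\mathbb{T}(t-s)(\cdots)ds\|_2 \le \int_0^t \|C_\Lambda\mathbb{T}(t-s)\|_2\|(\cdots)\|\,ds$.

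The main obstacle I anticipate is making the perturbation identity rigorous when \emph{both} $C$ and $P$ are only admissible (so $C_\Lambda, P_\Lambda$ are genuinely unbounded on $H$), rather than bounded: one must justify that $C_\Lambda \mathbb{T}(t-s)P_\Lambda\mathbb{S}(s)x$ is a well-defined integrable $Y$-valued function of $s$ for a.e. $t$ and $x\in H$, and that the resulting identity closes up. This is where the work in \cite{hadd2005unbounded} on unbounded perturbations of observation operators does the heavy lifting; I would invoke it to get $P$ admissible for $\calA$ and the corresponding regularity $\operatorname{Ran}\mathbb{S}(t)\subset D(P_\Lambda)$, then feed this into the estimate above. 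A secondary (minor) technical point is verifying $C_\Lambda = C_\Lambda^{\calA}$ on $\bigcup_{t>0}\operatorname{Ran}\mathbb{S}(t)$, which follows from $R(\lambda,\calA)-R(\lambda,A)$ being controlled by $P$-admissibility and the definition of the Yosida extension as a limit of $\lambda R(\lambda,\cdot)$. Once these are in place, the summation of the Picard series gives the finite-time Hilbert--Schmidt bound and hence the theorem.
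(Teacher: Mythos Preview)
Your time-domain variation-of-parameters strategy is genuinely different from the paper's argument, which works entirely in the frequency domain. After shifting by $\omega>\max\{\omega_0(\T),\omega_0(\S)\}$ to make both semigroups exponentially stable, the paper uses the Paley--Wiener theorem to identify $\calS$-admissibility of $C$ for $A$ with membership of $\lambda\mapsto CR(\lambda,A_\omega)$ in the Hardy space $H^2(\calL_2(H,Y))$, then applies the resolvent identity $CR(\lambda,\calA_\omega)=CR(\lambda,A_\omega)\bigl[I+PR(\lambda,\calA_\omega)\bigr]$. Admissibility of $P$ for $\calA$ (inherited from admissibility for $A$, via \cite{hadd2005unbounded}) together with exponential stability of $\S_\omega$ yields a uniform bound $\sup_{\lambda\in\C_0}\|PR(\lambda,\calA_\omega)\|<\infty$, so the right-hand side is an $H^2(\calL_2)$-function multiplied by a bounded $\calL(H)$-valued analytic factor, hence again in $H^2(\calL_2)$; a second application of Paley--Wiener gives $\int_0^\infty\|C_\Lambda\S_\omega(t)\|_2^2\,dt<\infty$. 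The point is that passing to the Laplace side converts your convolution into a pointwise product and replaces the pointwise-in-$t$ control you seek with a uniform-in-$\lambda$ resolvent bound, which is precisely what admissibility provides.

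Your Minkowski step $\bigl\|\int_0^t C_\Lambda\T(t-s)P_\Lambda\S(s)\,ds\bigr\|_2\le\int_0^t\|C_\Lambda\T(t-s)\|_2\,\|P_\Lambda\S(s)\|\,ds$ has a real gap: admissibility of $P$ for $\calA$ gives only $\int_0^T\|P_\Lambda\S(s)x\|^2ds\le K\|x\|^2$ for each $x\in H$, not that $s\mapsto\|P_\Lambda\S(s)\|_{\calL(H)}$ lies in $L^1(0,T)$ (let alone $L^2$), and the latter is exactly what your convolution bound and the factorial decay $(Ct)^k/k!$ in the Picard iteration require. So the Gr\"onwall scheme as written does not close. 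A clean time-domain repair, closer in spirit to what you outline, is to avoid the iteration altogether: observe that the integral operator $\calK_T:u\mapsto\bigl(t\mapsto\int_0^t C_\Lambda\T(t-s)u(s)\,ds\bigr)$ is itself Hilbert--Schmidt from $L^2(0,T;H)$ to $\calY_T$, since its kernel satisfies $\int_0^T\!\int_0^t\|C_\Lambda\T(t-s)\|_2^2\,ds\,dt\le T\gamma(T)<\infty$. Then the perturbation identity reads $\Psi_T^{\calA}=\Psi_T+\calK_T\circ\Psi_T^{P,\calA}$, which is the sum of a Hilbert--Schmidt operator and a composition (Hilbert--Schmidt)$\circ$(bounded), hence Hilbert--Schmidt. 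This uses exactly the ingredients you listed but places the $\calL_2$-structure on the convolution operator rather than trying to propagate it through pointwise bounds on $P_\Lambda\S(s)$.
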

	\begin{proof}
		Let $\omega>\max\left\{\omega_0(\mathbb{T}),\omega_0(\mathbb{S})\right\}$. Define the operators $A_\omega:=A-\omega$ and $\mathcal{A}_\omega:=A_\omega+P$ with domain $D(\mathcal{A}_\omega)=D(A_\omega)=D(A)$. Denote $\mathbb{T}_\omega$ and $\mathbb{S}_\omega$ the semigroup generated by $A_\omega$ and $\mathcal{A}_\omega$, respectively. By Remark \ref{rem1}, we have $\int_0^{+\infty}\|C_\Lambda \mathbb T_\omega(t)\|^2_2dt<\infty$ and $C_\Lambda \mathbb T_\omega(t)\in\mathcal{L}_2(H,Y)$ for a.e. $t\geq0$. Now, we define the operator
		\begin{align*}
			\mathfrak{S}:\mathbb{R}^+&\to \mathcal{L}_2(H,Y)\\
			t&\mapsto C_\Lambda\mathbb T_\omega(t).
		\end{align*}
		Thus, since $\mathcal{L}_2(H,Y)$ is a separable Hilbert space it follows from the Paley-Wiener theorem that the Laplace transform $\widehat{\mathfrak{S}}$ of $\mathfrak{S}$ is in $H^2\left(\mathcal{L}_2(H,Y)\right)$. Moreover, for any $x\in H$ and any $\lambda\in\mathbb{C}_0$, we have $\widehat{\mathfrak{S}(\lambda)}x=CR(\lambda,A_\omega)x$, and hence $ CR(\lambda,A_\omega)\in H^2\left(\mathcal{L}_2(H,Y)\right)$. On the other hand, we know that
		\begin{align} \label{yyyy}
			CR(\lambda,\mathcal A_\omega)=CR(\lambda,A_\omega)\left[I+PR(\lambda,\mathcal A_\omega)\right]
		\end{align}
for any $\lambda\in\mathbb{C}_0$.
		Moreover, $P$ is an admissible observation operator for $\mathcal{A}_\omega$, see \cite{hadd2005unbounded}. Thus,  it follows from the exponential stability of  $\mathbb{S}_\omega$ that $$\|PR(\lambda,\mathcal A_\omega)\|<\infty,$$  for any $\lambda\in\mathbb{C}_0$. According to \eqref{yyyy}, we obtain that $CR(\lambda,\mathcal A_\omega)\in  H^2\left(\mathcal{L}_2(H,Y)\right)$. Using again the Paley–Wiener theorem, we obtain that $\int_0^{+\infty}\|C_\Lambda\mathbb S_\omega(t)\|^2_2dt<\infty$. This means that $C$ is $\mathcal{S}$-admissible for $\mathcal{A}_\omega$, and hence also for $\mathcal{A}$.
	\end{proof}
	As a consequence of this result, we obtain the following perturbation result for $\bf(SCP)_{A,B}$ under Desh-Schappacher perturbation.
	\begin{cor}\label{cor}
		Let $\mathscr P\in\mathcal{L}(H,H_{-1})$ be an admissible control operator for $A$. If  $\bf(SCP)_{A,B}$ has a solution, then $\bf	(SCP)_{\mathscr{A},B}$ has a solution as well.
	\end{cor}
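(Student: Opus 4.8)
\textbf{Proof proposal for Corollary \ref{cor}.} The plan is to dualize the problem and reduce it to Theorem \ref{3.8}. Recall that by Proposition \ref{prop3}(iii), $\bf(SCP)_{A,B}$ has a solution precisely when $B^*$ is an admissible observation operator for $A^*$ together with the Hilbert--Schmidt condition \eqref{cond}; equivalently, by Proposition \ref{prop3}(ii), when $B^*$ is $\mathcal{S}$-admissible for $A^*$. Similarly, $\bf(SCP)_{\mathscr{A},B}$ has a solution iff $B^*$ is $\mathcal{S}$-admissible for $\mathscr{A}^*$. So the entire assertion reduces to showing: if $B^*$ is $\mathcal{S}$-admissible for $A^*$, then it is $\mathcal{S}$-admissible for $\mathscr{A}^*$.

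Next I would identify $\mathscr{A}^*$ as a Weiss--Hadd observation perturbation of $A^*$. The hypothesis is that $\mathscr{P}\in\mathcal{L}(H,H_{-1})$ is an admissible control operator for $A$; by the duality between admissible control and admissible observation operators (recalled in Section \ref{section2} and \cite[Section 4.4]{tucsnak2009observation}), $\mathscr{P}^*\in\mathcal{L}(H_1^d,H)$ is an admissible observation operator for $A^*$. Moreover, the semigroup $\mathscr{T}$ generated by $\mathscr{A}=(A_{-1}+\mathscr{P})_{|H}$ has, as its adjoint, the semigroup generated by the observation perturbation $A^*+\mathscr{P}^*$ with domain $D(A^*)$ — this is the standard fact that the Desch--Schappacher perturbed semigroup and the Hadd observation-perturbed semigroup are adjoint to each other (cf. \cite{hadd2005unbounded}, \cite[Theorem 5.4.2]{tucsnak2009observation}). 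Thus $\mathscr{A}^* = A^* + \mathscr{P}^*$ is exactly the operator $\mathcal{A}$ of Theorem \ref{3.8}, with the roles of $A$, $C$, $P$ played by $A^*$, $B^*$, $\mathscr{P}^*$ respectively.

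With this identification in hand the conclusion is immediate: Theorem \ref{3.8} applied to $A^*$ (in place of $A$), $C=B^*$ ($\mathcal{S}$-admissible observation operator for $A^*$), and $P=\mathscr{P}^*$ (admissible observation operator for $A^*$) gives that $B^*$ is an $\mathcal{S}$-admissible observation operator for $\mathscr{A}^*$. Reading this back through Proposition \ref{prop3}(ii), $B$ is an $\mathcal{S}$-admissible control operator for $\mathscr{A}$, and then Theorem \ref{TH32} (equivalently the remark following Proposition \ref{prop3}) yields that $\bf(SCP)_{\mathscr{A},B}$ has a solution.

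The only genuinely non-routine point is the identification $\mathscr{A}^* = A^* + \mathscr{P}^*$ with domain $D(A^*)$, i.e.\ that taking adjoints interchanges the Desch--Schappacher (control-type) perturbation with the Hadd (observation-type) perturbation; one should verify this carefully, for instance by checking that the variation-of-constants representations of $\mathscr{T}(t)$ and of the $\mathbb{S}$-type semigroup for $A^*+\mathscr{P}^*$ are transposes of each other, or by comparing resolvents via the identity $R(\lambda,\mathscr{A}) = R(\lambda,A)\bigl(I - \mathscr{P}R(\lambda,A)\bigr)^{-1}\!\big|_H$ dualized against $R(\lambda, A^*+\mathscr{P}^*) = \bigl(I - R(\lambda,A^*)\mathscr{P}^*\bigr)^{-1}R(\lambda,A^*)$. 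Everything else is a direct concatenation of the duality statements in Section \ref{section2} with Proposition \ref{prop3}, Theorem \ref{TH32}, and Theorem \ref{3.8}.
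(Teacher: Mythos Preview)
Your proposal is correct and follows essentially the same route as the paper's proof: reduce via Proposition \ref{prop3} and Theorem \ref{TH32} to showing that $B^*$ is $\mathcal{S}$-admissible for $\mathscr{A}^*$, identify $\mathscr{A}^* = A^* + \mathscr{P}^*$ on $D(A^*)$ (the paper justifies this via the closed-loop generator of the regular triple $(A,\mathscr{P},I)$ with identity feedback, which is the same fact you propose to check by resolvent duality), and then invoke Theorem \ref{3.8} with $(A^*,B^*,\mathscr{P}^*)$ in place of $(A,C,P)$.
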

	\begin{proof}
According to Proposition \ref{prop3}, it suffices to prove that $B^\ast$ is $\calS$-admissible observation operator for $\mathscr{A}^\ast$. Observe that $\mathscr A^\ast=A^\ast+\mathscr{P}^\ast$ with domain $D(\mathscr{A}^\ast)=D(A^\ast)$. ( This is due to the fact that $\mathscr{A}$ is exactly the generator of the closed loop system associated with the regualr triple $(A,\mathscr{P},I)$ and the identity as admissible feedback. Thus $\mathscr{A}^\ast$ is the generator of the triple $(A^\ast,I,\mathscr{P}^\ast)$ with identity operator as admissible feedback.) Here we recall that $\mathscr{P}^\ast$ is an admissible observation operator for $A^\ast$. From Theorem \ref{TH32} and Proposition \ref{prop3} we deduce that $B^\ast$ is an $\mathcal S$-admissible observation operator for $A^\ast$. Therefore, Theorem \ref{3.8} implies that $B^\ast$ is an $\mathcal S$-admissible observation operator for $\mathscr{A}^\ast$. This ends the proof.
	\end{proof}
	
	As bounded operators $\mathscr{P}\in\mathcal{L}(H)$ are admissible control operators for $A$, we have the following result  as a consequence of Corollary \ref{cor}. It extends the result in \cite[Theorem 3.3]{haak} (where the case $B\in\mathcal{L}(U,H)$ was considered) in the Hilbert setting.
	\begin{cor}
		Let $\mathscr{P}\in\mathcal{L}(H)$. If $\bf	(SCP)_{A,B}$ has a solution, then $\bf	(SCP)_{A+\mathscr{P},B}$ has a solution as well.
	\end{cor}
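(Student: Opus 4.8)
The plan is to reduce the statement to Corollary \ref{cor}. For this it suffices to verify that every $\mathscr{P}\in\mathcal{L}(H)$, regarded as an operator from $H$ into $H_{-1}$ through the continuous dense embedding $H\hookrightarrow H_{-1}$, is an admissible control operator for $A$. Once this is known, Corollary \ref{cor} applies to $\mathscr{A}:=A_{-1}+\mathscr{P}$ with $D(\mathscr{A})=\{x\in H:\ (A_{-1}+\mathscr{P})x\in H\}$ and yields a solution of $\bf (SCP)_{\mathscr{A},B}$; since $\mathscr{P}x\in H$ for every $x\in H$, one has $(A_{-1}+\mathscr{P})x\in H\iff A_{-1}x\in H\iff x\in D(A)$, so that $D(\mathscr{A})=D(A)$ and $\mathscr{A}_{|H}=A+\mathscr{P}$, i.e. $\bf (SCP)_{\mathscr{A},B}$ is exactly $\bf (SCP)_{A+\mathscr{P},B}$.

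The first step is to compute, for $u\in\mathcal{U}_T$ and $t\in[0,T]$, the input map associated with the pair $(A,\mathscr{P})$:
\[
\int_0^t\mathbb{T}_{-1}(t-s)\mathscr{P}u(s)\,ds=\int_0^t\mathbb{T}(t-s)\mathscr{P}u(s)\,ds,
\]
the equality being valid because $\mathscr{P}u(s)\in H$, so that $\mathbb{T}_{-1}(t-s)$ reduces to $\mathbb{T}(t-s)$. With $M_T:=\sup_{0\le r\le T}\|\mathbb{T}(r)\|$ we have $\|\mathbb{T}(t-s)\mathscr{P}u(s)\|_H\le M_T\|\mathscr{P}\|\,\|u(s)\|_U$, and since $u\in L^2(0,T;U)\subset L^1(0,T;U)$ the integrand lies in $L^1(0,t;H)$; hence the integral defines an element of $H$, with $\|\cdot\|_H\le\sqrt{t}\,M_T\|\mathscr{P}\|\,\|u\|_{\mathcal{U}_T}$ by the Cauchy--Schwarz inequality. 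Thus the range of this input map is contained in $H$ for every $t>0$, which is precisely the definition of $\mathscr{P}$ being an admissible control operator for $A$.

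It then only remains to invoke Corollary \ref{cor}: as $\bf (SCP)_{A,B}$ has a solution by hypothesis and $\mathscr{P}\in\mathcal{L}(H)$ is an admissible control operator for $A$, the perturbed problem $\bf (SCP)_{\mathscr{A},B}$ has a solution, equivalently $\bf (SCP)_{A+\mathscr{P},B}$ has a solution.

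The only non-bookkeeping point in this argument is the verification that a bounded perturbation belongs to the Desch--Schappacher admissible class, and this is immediate from the strong continuity of $\mathbb{T}$ together with the boundedness of $\mathscr{P}$; no genuine obstacle arises. I expect the corollary to be recorded mainly to isolate the classical case $\mathscr{P}\in\mathcal{L}(H)$ and to relate it to \cite[Theorem 3.3]{haak} in the Hilbert setting with unbounded $B$.
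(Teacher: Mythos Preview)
Your argument is correct and follows the same route as the paper: the paper simply records that bounded operators $\mathscr{P}\in\mathcal{L}(H)$ are admissible control operators for $A$ and invokes Corollary \ref{cor}, while you spell out the verification and the identification $\mathscr{A}=A+\mathscr{P}$ on $D(A)$. One small notational slip: for the admissibility of $\mathscr{P}\in\mathcal{L}(H,H_{-1})$ the input map acts on $L^2(0,T;H)$, not on $\mathcal{U}_T=L^2(0,T;U)$, so $u(s)\in H$ and $\|u(s)\|_H$ should replace $\|u(s)\|_U$; this does not affect the argument.
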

\section{An application to a one-dimensional heat equation with a white-noise Neumann boundary control} \label{Last-section}
We consider the following heat equation with Neumann boundary control of white-noise type:
		
		\begin{align*}
			\bf{(HE)}\;\;
			\begin{cases}
				\frac{\partial X}{\partial t}(t,\xi)=\frac{\partial^2 X}{\partial \xi^2}(t,\xi), & 0<\xi<\pi,\; t\in[0,T],\\
				\frac{\partial X}{\partial \xi}(t,0)=MX(t,\cdot), &t\in[0,T],\\
				\frac{\partial X}{\partial \xi}(t,\pi)=\dot{W}(t), &t\in[0,T],\\
				X(0,\xi)=X_0(\xi), & 0<\xi<\pi,
			\end{cases}
		\end{align*}
		where $M\in\mathcal{L}\left(L^2(0,\pi),\mathbb{R}\right)$ and $(W(t))_{t\in[0,T]}$ is real Wiener process. To reformulate the system into the abstract setting, we  consider the following operators
		\begin{align*}
			&\mathscr{A}_m:=\frac{\partial^2}{\partial\xi^2}, \qquad D(\mathscr{A}_m):=\left\{\varphi\in H^2(0,\pi), \quad \varphi'(0)=M\varphi\right\},\\
			&\mathscr{G}:D(\mathscr{A}_m)\to \mathbb{R}, \qquad \mathscr{G}\varphi=\varphi'(\pi).
		\end{align*}
		With these operators, the problem $\bf(HE)$ is reformulated in $H:=L^2(0,\pi)$ as follows
		\begin{align}\label{exa}
			\begin{cases}
				\dot{X}(t)=\mathscr{A}_mX(t), &t\in[0,T],\\
				\mathscr{G}X(t)=\dot{W}(t), &t\in[0,T],\\
				X(0)=X_0.
			\end{cases}
		\end{align}
		In order to reformulate \eqref{exa} as a stochastic Cauchy problem, we verify assumptions $\bf(A1)$ and $\bf(A2)$. Since the operator $\mathscr{G}$ is surjective, it remains to verify that the operator
		\begin{align*}
			\mathscr{A}:=\mathscr{A}_m, \qquad D(\mathscr{A}):=\left\{\varphi\in H^2(0,\pi), \quad \varphi'(0)=M\varphi, \;\; \varphi'(\pi)=0\right\},
		\end{align*}
		generates a strongly continuous semigroup  on $H$. Let us first define the operators
		\begin{align*}
			&A_m:=\frac{\partial^2}{\partial\xi^2},\qquad	D(A_m):=\left\{\varphi\in H^2(0,\pi), \quad  \varphi'(\pi)=0\right\}, \\
			&G: D(A_m)\to \mathbb{R}, \qquad G\varphi=\varphi'(0).
		\end{align*}
		It is well-known that the operator
		\begin{align*}
			A:=A_m, \qquad D(A):=\left\{\varphi\in H^2(0,\pi), \quad \varphi'(0)=0, \;\; \varphi'(\pi)=0\right\},
		\end{align*}
		generates a strongly continuous semigroup $\mathbb{T}:=(\mathbb{T}(t))_{t\geq0}$ on $H$ and the operator $G$ is surjective.  Then, the Dirichlet operator $\mathbb{D}_\lambda:\mathbb{R}\to H$ associated with $A_m$ and $G$ satisfies
		\begin{align*}
			\mathbb{D}_\lambda\alpha=\varphi\; \Longleftrightarrow\; \left\{\lambda\varphi-\varphi''\;\;\text{in}\;[0,\pi], \quad\varphi'(0)=\alpha,\;\; \varphi'(\pi)=0 \right\}.
		\end{align*}
		Define $B:=-A_{-1}\mathbb{D}_0\in\mathcal{L}(\mathbb{R},H_{-1})$.
		\begin{lem}\label{lem}
			$B$ is an admissible control operator for $A$ and its adjoint $B^*\in\mathcal{L}(D(A),\mathbb{R})$ is given by
			\begin{align*}
				B^*\varphi=-\varphi(0).
			\end{align*}
		\end{lem}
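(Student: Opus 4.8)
The statement contains two assertions — the closed form of $B^\ast$ and the admissibility of $B$ — and the plan is to treat them in that order, the first by a duality/integration-by-parts computation and the second by a fractional-power estimate exploiting the analyticity of $\mathbb{T}$.

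First I would pin down $B^\ast$. Although $B$ is displayed as $-A_{-1}\mathbb{D}_0$, the operator $B=(\lambda-A_{-1})\mathbb{D}_\lambda$ is independent of the chosen $\lambda\in\rho(A)$, so one may work with a fixed real $\lambda\in\rho(A)$ (e.g. $\lambda=1$). Since $A$ is the Neumann Laplacian on $(0,\pi)$ it is self-adjoint, hence $H_1^d=D(A^\ast)=D(A)$, which already gives $B^\ast\in\mathcal{L}(D(A),\mathbb{R})$ once the formula is established (note $D(A)\hookrightarrow H^2(0,\pi)\hookrightarrow C[0,\pi]$, so point evaluation is well defined on $D(A)$). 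For $\alpha\in\mathbb{R}$ and $\varphi\in D(A)$, using the extrapolation duality $\langle A_{-1}z,\varphi\rangle_{H_{-1}\times H_1^d}=\langle z,A^\ast\varphi\rangle_H$ valid for $z\in H$, one writes
\[
\langle B\alpha,\varphi\rangle_{H_{-1}\times H_1^d}=\big\langle (\lambda-A_{-1})\mathbb{D}_\lambda\alpha,\varphi\big\rangle_{H_{-1}\times H_1^d}=\big\langle \psi,(\lambda-A)\varphi\big\rangle_H=\int_0^\pi \psi(\xi)\big(\lambda\varphi(\xi)-\varphi''(\xi)\big)\,d\xi,
\]
where $\psi:=\mathbb{D}_\lambda\alpha$ solves $\lambda\psi-\psi''=0$, $\psi'(0)=\alpha$, $\psi'(\pi)=0$. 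Integrating by parts twice yields $\int_0^\pi\psi(\lambda\varphi-\varphi'')=\int_0^\pi(\lambda\psi-\psi'')\varphi-\big[\psi\varphi'-\psi'\varphi\big]_0^\pi$; the bulk term vanishes because $\lambda\psi-\psi''=0$, and in the boundary term $\varphi'(0)=\varphi'(\pi)=0$ (as $\varphi\in D(A)$) and $\psi'(\pi)=0$, $\psi'(0)=\alpha$ leave only $-\alpha\varphi(0)$. Hence $\langle B\alpha,\varphi\rangle=-\alpha\,\varphi(0)$, i.e. $B^\ast\varphi=-\varphi(0)$.

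For admissibility I would fix $\theta\in(1/4,1/2)$ and show $B=(1-A_{-1})^{\theta}\widetilde B$ with $\widetilde B\in\mathcal{L}(\mathbb{R},H)$, which is equivalent to $(1-A_{-1})^{-\theta}B=(1-A_{-1})^{1-\theta}\mathbb{D}_1$ being bounded from $\mathbb{R}$ to $H$, i.e. to $\mathbb{D}_1 1\in D\big((1-A)^{1-\theta}\big)$. The function $\mathbb{D}_1 1$ is smooth on $[0,\pi]$ (explicitly a combination of $\cosh$), hence lies in $H^{2(1-\theta)}(0,\pi)$; since $2(1-\theta)<3/2$, no boundary condition is imposed at that level of the scale, so indeed $\mathbb{D}_1 1\in D\big((1-A)^{1-\theta}\big)$. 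Because $\mathbb{T}$ is analytic, $\|(1-A)^{\theta}\mathbb{T}(t)\|\lesssim t^{-\theta}$, so for $u\in\mathcal{U}_T$,
\[
\|\Phi_t u\|_H=\Big\|\int_0^t (1-A)^{\theta}\mathbb{T}(t-s)\widetilde B\,u(s)\,ds\Big\|_H\lesssim \int_0^t (t-s)^{-\theta}\|u(s)\|\,ds\le \Big(\int_0^t r^{-2\theta}\,dr\Big)^{1/2}\|u\|_{\mathcal{U}_T},
\]
which is finite since $2\theta<1$; thus $\operatorname{Ran}(\Phi_t)\subset H$ and $B$ is admissible for $A$. (Alternatively, one can invoke the admissible observation/control duality of Section~\ref{section2}: since $B^\ast$ is, up to sign, point evaluation at $0$ and $A^\ast=A$, expand along the Neumann eigenbasis $\tfrac1{\sqrt\pi},\sqrt{\tfrac2\pi}\cos(n\cdot)$, peel off the constant mode — whose contribution to $(\mathbb{T}(t)\varphi)(0)$ is constant in $t$, hence bounded on $[0,\tau]$ — and apply the admissibility criterion to the exponentially stable part, using $\|\mathbb{D}_\lambda\|=\|R(\lambda,A_{-1})B\|\lesssim(\mathrm{Re}\,\lambda)^{-3/4}$ for $\mathrm{Re}\,\lambda$ large.)

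I expect the integration-by-parts step to be entirely routine; the delicate point is the admissibility argument, specifically verifying that $\mathbb{D}_1 1$ sits in the correct interpolation space $D((1-A)^{1-\theta})$ for some $\theta<\tfrac12$ — equivalently, that the order of the boundary ``defect'' of $B$ ($B^\ast=-\delta_0$ extends continuously to $H^{s}(0,\pi)$ only for $s>\tfrac12$, so $B\in\mathcal{L}(\mathbb{R},H_{-\beta})$ exactly for $\beta>\tfrac14$) leaves the safety margin $\tfrac14<\beta<\tfrac12$ that analyticity converts into admissibility. One must also take care that the zero Neumann eigenvalue forces working with $(1-A)^\theta$ rather than $(-A)^\theta$, or with the shifted generator, but this does not affect the estimates.
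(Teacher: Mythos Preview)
Your proof is correct. The integration-by-parts computation for $B^\ast$ is clean and accurate, and the admissibility argument via the factorisation $B=(1-A_{-1})^{\theta}\widetilde B$ with $\theta\in(\tfrac14,\tfrac12)$ is the standard route for Neumann boundary control of the heat equation; the identification $D\big((1-A)^{1-\theta}\big)=H^{2(1-\theta)}(0,\pi)$ for $2(1-\theta)<\tfrac32$ (no boundary conditions at this level of the scale) is exactly the delicate point, and you handle it properly, including the shift $1-A$ needed because of the zero Neumann eigenvalue.

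As for comparison: the paper does not actually prove this lemma --- its proof consists of a reference to \cite[Chap.~3]{lasiecka}. Your argument is essentially a self-contained rendering of the machinery developed there (abstract parabolic problems with boundary input of fractional order $<\tfrac12$). It is worth noting that the eigenfunction-expansion alternative you sketch at the end is in fact the style the paper adopts a few lines later, in Lemma~\ref{lem2}, to prove $\mathcal S$-admissibility of the companion operator $\mathscr B$; so either of your two approaches would mesh naturally with the surrounding text, the fractional-power route being more conceptual and the diagonal computation being more in the spirit of Section~\ref{Last-section}.
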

		\begin{proof}
			See for example \cite[Chap. 3]{lasiecka}.
		\end{proof}
		We select  the operator $\mathscr{P}:=BM\in\mathcal{L}(H,H_{-1})$. We have the following lemma.
		\begin{lem}\label{lem1}
			\begin{enumerate}
				\item[(i)] 	$\mathscr{P}$ is an admissible control operator for $A$.
				\item[(ii)] The operator $\mathscr{A}$ coincides with the operator
				\begin{align*}
					\mathfrak{A}:=A_{-1}+\mathscr{P},\qquad D(\mathfrak{A})=\left\{\varphi\in H, \quad (A_{-1}+\mathscr{P})\varphi\in H\right\}.
				\end{align*}
				Moreover, it generates a strongly continuous semigroup $\mathscr{T}:=\left(\mathscr{T}(t)\right)_{t\geq0}$ on $H$.
			\end{enumerate}
		\end{lem}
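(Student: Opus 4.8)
The plan is to handle the two assertions separately, the workhorse throughout being the Greiner representation $A_m=(A_{-1}+BG)_{|D(A_m)}$ from Section~\ref{section2} together with the direct sum $D(A_m)=D(A)\oplus\ker(\lambda-A_m)$, $\lambda\in\rho(A)$, both of which apply here because $G$ is surjective and $A$ generates a $C_0$-semigroup, as recorded just above the statement. For \textbf{(i)}: $M\in\mathcal{L}(H,\mathbb{R})$ is bounded and $B$ is an admissible control operator for $A$ by Lemma~\ref{lem}, so I would deduce the admissibility of $\mathscr{P}=BM$ by factoring its input map. For $u\in L^2(0,T;H)$ the scalar function $s\mapsto Mu(s)$ lies in $L^2(0,T;\mathbb{R})$, with $\|Mu\|_{L^2(0,T;\mathbb{R})}\le\|M\|\,\|u\|_{L^2(0,T;H)}$, and the input map of $(A,\mathscr{P})$ satisfies $\Phi^{\mathscr{P}}_t u=\int_0^t\mathbb{T}_{-1}(t-s)B\bigl(Mu(s)\bigr)\,ds=\Phi^{B}_t(Mu)$, where $\Phi^{B}_t$ is the input map of $(A,B)$. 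Since $B$ is admissible, $\operatorname{Ran}(\Phi^{B}_t)\subset H$, hence $\operatorname{Ran}(\Phi^{\mathscr{P}}_t)\subset H$ and $\mathscr{P}$ is an admissible control operator for $A$.

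For \textbf{(ii)} I would establish the operator identity $\mathscr{A}=\mathfrak{A}$ by proving the two domain inclusions with equal actions. If $\varphi\in D(\mathscr{A})$, then $\varphi\in H^2(0,\pi)$ with $\varphi'(\pi)=0$, so $\varphi\in D(A_m)$ and $G\varphi=\varphi'(0)=M\varphi$; the Greiner identity gives $\mathscr{A}\varphi=\varphi''=A_m\varphi=A_{-1}\varphi+B(G\varphi)=(A_{-1}+BM)\varphi=(A_{-1}+\mathscr{P})\varphi\in H$, so $\varphi\in D(\mathfrak{A})$ and $\mathfrak{A}\varphi=\mathscr{A}\varphi$. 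Conversely, let $\varphi\in D(\mathfrak{A})$; put $\alpha:=M\varphi\in\mathbb{R}$ and $f:=(A_{-1}+\mathscr{P})\varphi\in H$, and write $B=(\lambda-A_{-1})\mathbb{D}_\lambda$ for a fixed $\lambda\in\rho(A)$ (this operator being independent of $\lambda$). Then $A_{-1}\varphi=f-B\alpha$ rearranges to $A_{-1}(\varphi-\mathbb{D}_\lambda\alpha)=f-\lambda\mathbb{D}_\lambda\alpha\in H$, whence $\varphi-\mathbb{D}_\lambda\alpha\in D(A)$ and therefore $\varphi\in D(A)\oplus\ker(\lambda-A_m)=D(A_m)$; in particular $\varphi\in H^2(0,\pi)$ with $\varphi'(\pi)=0$, and because $G$ vanishes on $D(A)$ while $G\mathbb{D}_\lambda$ is the identity on $\mathbb{R}$, we get $\varphi'(0)=G\varphi=\alpha=M\varphi$, so $\varphi\in D(\mathscr{A})$ and $\mathscr{A}\varphi=\varphi''=A_m\varphi=(A_{-1}+\mathscr{P})\varphi=\mathfrak{A}\varphi$. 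Once $\mathscr{A}=\mathfrak{A}$ is known, the generation of a $C_0$-semigroup $\mathscr{T}$ on $H$ is immediate from (i): by the Desch-Schappacher perturbation theorem recalled at the start of Section~\ref{section4}, the part of $A_{-1}+\mathscr{P}$ in $H$, which is precisely $\mathfrak{A}$, generates a $C_0$-semigroup.

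The step I expect to be the main obstacle is the inclusion $D(\mathfrak{A})\subseteq D(\mathscr{A})$: one must recover the full $H^2$-regularity and both boundary conditions of $\varphi$ from the purely abstract datum $(A_{-1}+\mathscr{P})\varphi\in H$, and the device that makes this work is Greiner's splitting $D(A_m)=D(A)\oplus\ker(\lambda-A_m)$ combined with $G\mathbb{D}_\lambda=\operatorname{id}$ and the fact that $G$ annihilates $D(A)$. A secondary point requiring care is the bookkeeping with the resolvent parameter in $B=(\lambda-A_{-1})\mathbb{D}_\lambda$, and checking that the abstract part $\{\varphi\in H:(A_{-1}+\mathscr{P})\varphi\in H\}$ equals $D(\mathscr{A})$ exactly and not a strictly larger set; everything else is routine once the Greiner machinery is in place.
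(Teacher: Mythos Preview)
Your proof is correct. For (i) you take essentially the same route as the paper: both arguments factor the input map of $\mathscr{P}=BM$ through that of $B$ via $\Phi_t^{\mathscr{P}}u=\Phi_t^{B}(Mu)$ and invoke the admissibility of $B$ (Lemma~\ref{lem}); the paper writes out the resulting norm estimate, you phrase it as a range inclusion, but the content is identical.

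For (ii) the paper does not give an argument at all and simply refers to \cite{hadd2015}. Your proof is therefore strictly more informative: you supply a self-contained verification that $\mathscr{A}=\mathfrak{A}$ by exploiting Greiner's decomposition $D(A_m)=D(A)\oplus\ker(\lambda-A_m)$ and the identity $A_m=(A_{-1}+BG)_{|D(A_m)}$, and then deduce generation from the Desch--Schappacher theorem recalled in Section~\ref{section4}. The delicate inclusion $D(\mathfrak{A})\subseteq D(\mathscr{A})$ is handled correctly: the key step, that $A_{-1}(\varphi-\mathbb{D}_\lambda\alpha)\in H$ forces $\varphi-\mathbb{D}_\lambda\alpha\in D(A)$, is just the fact that $A$ is the part of $A_{-1}$ in $H$, and the boundary condition $\varphi'(0)=M\varphi$ then drops out of $G\mathbb{D}_\lambda=\mathrm{id}$ and $G_{|D(A)}=0$ exactly as you say.
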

		\begin{proof}
			(i) Let $\Phi_t^\mathcal{\mathscr{P}}$ be the input map associated with $A$ and $\mathscr{P}$
			\begin{align*}
				\Phi_t^\mathcal{\mathscr{P}}u:=\int_0^t\mathbb{T}_{-1}(t-s)\mathscr{P}u(s)ds, \qquad u\in\mathcal{U}_T.
			\end{align*}
			Since $B$ is admissible for $A$, we have
			\begin{align*}
				\left\|\Phi_t^\mathscr{P}u\right\|&=\left\|\int_0^t\mathbb{T}_{-1}(t-s)BMu(s)ds\right\|\\
				&\leq \gamma(t)\|M\|\left(\int_0^t\|u(s)\|^2ds\right)^{\frac{1}{2}}
			\end{align*}
			for any $u\in\mathcal{U}_T$. Thus, $\mathscr{P}$ is an admissible control operator for $A$.
			
			For (ii) see for example \cite{hadd2015}.
		\end{proof}
		Now, let  $\mathscr{D}_\lambda\in\mathcal{L}(\mathbb{R},H)$, $\lambda\in\rho(\mathscr{A})$, be the Dirichlet operator associated with $\mathscr{A}_m$ and $\mathscr{G}$. Then for any $\alpha\in\mathbb{R}$ and $\varphi\in H$ we have
		\begin{align*}
			\mathscr{D}_\lambda\alpha=\varphi\; \Longleftrightarrow\; \left\{\lambda\varphi-\varphi''\;\;\text{in}\;[0,\pi], \quad\varphi'(0)=M\varphi,\;\; \varphi'(\pi)=\alpha \right\}.
		\end{align*}
		We set $\mathscr{B}:=-\mathscr{A}_{-1}\mathscr{D}_0\in\mathcal{L}(\mathbb{R},H_{-1})$. Thus, the problem \eqref{exa} is reformulated as
		\begin{align}\label{exa2}
			\begin{cases}
				\dot{X}(t)=\mathscr{A}_{-1}X(t)+\mathscr{B}dW(t), &t\in[0,T],\\
				X(0)=X_0.
			\end{cases}
		\end{align}
		\begin{lem}\label{lem2}
			$\mathscr{B}$ is $\mathcal{S}$-admissible control operator for $A$.
		\end{lem}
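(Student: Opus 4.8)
The plan is to identify $\mathscr{B}$ explicitly and then to conclude by the Dirichlet-operator characterisation of $\mathcal{S}$-admissibility from Proposition~\ref{car_adm}, just as is done for $B$ via Lemma~\ref{lem}.

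First I would compute $\mathscr{B}^\ast$. Since $\mathscr{P}=BM$ is an admissible control operator for $A$ (Lemma~\ref{lem1}) and $A$ is self-adjoint on $H=L^2(0,\pi)$, as noted in the proof of Corollary~\ref{cor} one has $\mathscr{A}^\ast=A^\ast+\mathscr{P}^\ast$ with $D(\mathscr{A}^\ast)=D(A^\ast)=D(A)$, and, using $B^\ast\varphi=-\varphi(0)$ (Lemma~\ref{lem}), $\mathscr{A}^\ast\varphi=\varphi''-\varphi(0)\,m$, where $m\in L^2(0,\pi)$ is the Riesz representative of $M$. Fixing $\lambda\in\rho(\mathscr{A})\cap\rho(A)$ and writing $\mathscr{B}=(\lambda-\mathscr{A}_{-1})\mathscr{D}_\lambda$, the extrapolation duality between $H_{-1}$ and $D(\mathscr{A}^\ast)$ gives, for $\alpha\in\mathbb{R}$ and $\varphi\in D(A)$,
\[
\langle\mathscr{B}\alpha,\varphi\rangle=\big\langle\mathscr{D}_\lambda\alpha,(\lambda-\mathscr{A}^\ast)\varphi\big\rangle_H=\lambda\langle\mathscr{D}_\lambda\alpha,\varphi\rangle-\langle\mathscr{D}_\lambda\alpha,\varphi''\rangle+\varphi(0)\,M(\mathscr{D}_\lambda\alpha);
\]
integrating $\langle\mathscr{D}_\lambda\alpha,\varphi''\rangle$ by parts twice and using $\varphi'(0)=\varphi'(\pi)=0$, $(\mathscr{D}_\lambda\alpha)''=\lambda\mathscr{D}_\lambda\alpha$, $(\mathscr{D}_\lambda\alpha)'(\pi)=\mathscr{G}\mathscr{D}_\lambda\alpha=\alpha$, $(\mathscr{D}_\lambda\alpha)'(0)=M(\mathscr{D}_\lambda\alpha)$, the boundary term $M(\mathscr{D}_\lambda\alpha)\,\varphi(0)$ cancels the contribution of $\mathscr{P}^\ast$ and one is left with $\langle\mathscr{B}\alpha,\varphi\rangle=\alpha\,\varphi(\pi)$. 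Hence $\mathscr{B}^\ast\varphi=\varphi(\pi)$ on $D(A)$; equivalently $\mathscr{B}$ is the control operator $\widetilde B=(\lambda-A_{-1})\widetilde{\mathbb{D}}_\lambda$ of the boundary system $(\widetilde A_m,\widetilde G)$ with $\widetilde A_m=\partial_\xi^2$ on $\{\varphi\in H^2(0,\pi):\varphi'(0)=0\}$ and $\widetilde G\varphi=\varphi'(\pi)$, whose base generator is exactly $A$: the Robin condition $\varphi'(0)=M\varphi$ built into $\mathscr{A}$ is invisible to the control acting at $\xi=\pi$.

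Next I would apply Proposition~\ref{car_adm}(ii) to $(\widetilde A_m,\widetilde G)$. Its Dirichlet operator is the rank-one map $(\widetilde{\mathbb{D}}_\lambda\alpha)(\xi)=\alpha\,\frac{\cosh(\sqrt{\lambda}\,\xi)}{\sqrt{\lambda}\,\sinh(\sqrt{\lambda}\,\pi)}$, $\lambda\in\rho(A)$. Fixing $\omega>\omega_0(\mathbb{T})$ and writing $\lambda_n=\omega+\frac{2\pi i n}{T}$, $a_n=\operatorname{Re}\sqrt{\lambda_n}\sim\sqrt{\pi|n|/T}\to\infty$, the elementary bounds $|\sinh(\sqrt{\lambda_n}\pi)|\ge\sinh(a_n\pi)$ and $\int_0^\pi|\cosh(\sqrt{\lambda_n}\xi)|^2\,d\xi\le\frac{\pi}{2}+\frac{\sinh(2a_n\pi)}{4a_n}$ yield
\[
\big\|\widetilde{\mathbb{D}}_{\lambda_n}\big\|_2^2=\frac{1}{|\lambda_n|\,|\sinh(\sqrt{\lambda_n}\pi)|^2}\int_0^\pi|\cosh(\sqrt{\lambda_n}\xi)|^2\,d\xi=O\!\big(a_n^{-1}|\lambda_n|^{-1}\big)=O\!\big(|n|^{-3/2}\big),
\]
so $\sum_{n\in\mathbb{Z}}\big\|\widetilde{\mathbb{D}}_{\omega+\frac{2\pi i n}{T}}\big\|_2^2<\infty$ (every term being finite because $\operatorname{Re}\lambda_n=\omega>0$). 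By Proposition~\ref{car_adm}(ii), $\mathscr{B}=\widetilde B$ is an $\mathcal{S}$-admissible control operator for $A$, as claimed. (Alternatively, from $\mathscr{B}^\ast\varphi=\varphi(\pi)$ one may argue directly with Proposition~\ref{prop3}: the trace at $\pi$ is admissible for $A=A^\ast$, e.g.\ by the reflection $\xi\mapsto\pi-\xi$ and Lemma~\ref{lem}, and $\int_0^T\|(\mathscr{B}^\ast)_\Lambda\mathbb{T}(t)\|_2^2\,dt=\int_0^T\big(\frac1\pi+\frac2\pi\sum_{k\ge1}e^{-2k^2t}\big)\,dt<\infty$ by expanding in the Neumann basis $\{\cos(k\,\cdot)\}$, so $\mathscr{B}^\ast$ is $\mathcal{S}$-admissible for $A^\ast$ and $\mathscr{B}$ for $A$ by Proposition~\ref{prop3}(ii).)

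The step I expect to be the main obstacle is the first one: establishing rigorously that $\mathscr{B}^\ast\varphi=\varphi(\pi)$, i.e.\ that the unbounded perturbation $\mathscr{P}=BM$ does not alter the control operator at $\xi=\pi$. This rests on carefully keeping track of $D(\mathscr{A}^\ast)=D(A^\ast)$ and of the exact cancellation of the boundary terms in the duality pairing; once it is in place, the asymptotic estimate and the invocation of Proposition~\ref{car_adm}(ii) are routine.
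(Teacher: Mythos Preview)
Your proposal is correct. Both you and the paper begin with the same key step: computing $\mathscr{B}^\ast\varphi=\varphi(\pi)$ on $D(A)$ via the extrapolation duality and two integrations by parts (the paper works with $\lambda=0$, you with a general $\lambda\in\rho(\mathscr{A})\cap\rho(A)$, but the cancellation mechanism is identical). After that, the paper proceeds exactly along your parenthetical alternative: it expands in the Neumann eigenbasis $\phi_n=\sqrt{2/\pi}\cos(n\,\cdot)$, notes $\mathscr{B}^\ast\phi_n=\sqrt{2/\pi}\,(-1)^n$, and computes
\[
\int_0^T\|\mathscr{B}^\ast_\Lambda\mathbb{T}^\ast(t)\|_2^2\,dt=\frac{2}{\pi}\sum_{n\in\mathbb{N}}\int_0^T e^{-2n^2t}\,dt<\infty,
\]
concluding by Proposition~\ref{prop3}. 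Your main route---identifying $\mathscr{B}$ with the control operator of the auxiliary boundary system $(\widetilde A_m,\widetilde G)$ and verifying the Dirichlet-series criterion of Proposition~\ref{car_adm}(ii) through the explicit $\cosh/\sinh$ asymptotics---is a genuinely different and valid alternative. The eigenbasis argument is shorter and exploits that $A$ is diagonal with explicitly known spectrum; your Dirichlet-operator argument is somewhat more robust (it would survive, say, a variable diffusion coefficient where no closed-form spectral decomposition is at hand) and showcases Proposition~\ref{car_adm} on a concrete example.
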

		\begin{proof}
			First we remark that $D(\mathscr{A}^*)\supseteqq D(A^*)=D(A)$, since $A$ is self-adjoint. Using Lemmas \ref{lem} and \ref{lem1}, we get
			\begin{align*}
				\langle \mathscr{B}\alpha,\varphi\rangle&=-\langle \mathscr{A}_{-1}\mathscr{D}_0\alpha,\varphi\rangle\\
				&=-\langle \mathscr{D}_0\alpha,\mathscr{A}^*\varphi\rangle\\
				&=-\langle \mathscr{D}_0\alpha,A^*\varphi\rangle-\langle \mathscr{D}_0\alpha,M^*B^*\varphi\rangle\\
				&=-\langle \mathscr{D}_0\alpha,A\varphi\rangle+M\mathscr{D}_0\varphi(0)\alpha
			\end{align*}
			for any $\alpha\in\mathbb{R}$ and any $\varphi\in D(A)$. \\
			On the other hand, Using an integration by parts twice and the definition of $\mathscr{D}_0$, we get that
			\begin{align*}
				\langle \mathscr{D}_0\alpha,A\varphi\rangle= -\varphi(\pi)\alpha+M\mathscr{D}_0\varphi(0)\alpha.
			\end{align*}
			Thus,
			\begin{align*}
				\langle \mathscr{B}\alpha,\varphi\rangle=\varphi(\pi)\alpha,
			\end{align*}
			which means that the adjoint operator of $\mathscr B$ is given by
			\begin{align*}
				\mathscr{B}^*\phi=\phi(\pi), \quad\forall \phi\in D(A).
			\end{align*}
			
			We recall that the family $(\phi_n)_{n\in\mathbb{N}}$, defined by 	
			\begin{align*}
				\phi_n(x)=\sqrt{\frac{2}{\pi}}\cos\left(nx\right), \quad \forall n\in\mathbb{N}, \;\; x\in(0,\pi),
			\end{align*}
			is an orthonormal basis in $H$ formed by the eigenvectors of $A$ and the corresponding eigenvalues are $\lambda_n=-n^2$, with $n\in\mathbb{N}$. Thus, the semigroup $\mathbb{T}$ is given by
			\begin{align*}
				\mathbb{T}(t)f=\sum_{n\in\mathbb{N}}e^{-n^2t}\langle f,\phi_n\rangle\phi_n,
			\end{align*}
			and $\mathscr{B}^*$ is defined by $\mathscr{B}^*\phi_n=\sqrt{\frac{2}{\pi}}\cdot(-1)^n$. An easy calculation shows that $\mathscr{B}^*$ is an admissible observation operator for $A$. Then, By (ii) of Proposition \ref{prop3}, $B$ is $\mathcal{S}$-admissible for $A$ if and only
			\begin{align*}
				\int_0^T\|\mathscr{B}^*_\Lambda \mathbb{T}^*(t)\|^2_2dt<\infty.
			\end{align*}
			For any $T>0$, we have
			\begin{align*}
				\int_0^T\|\mathscr B^*_\Lambda \mathbb{T}^*(t)\|^2_2dt&=\sum_{n\in\mathbb{N}}\int_0^T|\mathscr B^*\mathbb{T}^*(t)\phi_n|^2dt\notag\\
				&=\sum_{n\in\mathbb{N}}\int_0^T|\mathscr B^*e^{-n^2t}\phi_n|^2dt\notag\\
				&=\frac{2}{\pi}\sum_{n\in\mathbb{N}}\int_0^Te^{-2n^2t}dt<\infty.
			\end{align*}
			Consequently, $\mathscr B$ is a $\mathcal{S}$-admissible control operator for $A$.
		\end{proof}
		\begin{thm}
			The problem $\bf(HE)$ has a unique mild solution $X$ given by
			\begin{align*}
				X(t)=\mathscr{T}(t)X_0+\int_0^t\mathscr{T}_{-1}(t-s)BdW(s), \qquad t\in[0,T],
			\end{align*}
			for any $X_0\in H$.
		\end{thm}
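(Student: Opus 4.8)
The plan is to recognise $\bf(HE)$ as the stochastic Cauchy problem $\bf(SCP)_{\mathscr{A},\mathscr{B}}$ and then to chain together Theorem~\ref{TH32}, Lemmas~\ref{lem1} and~\ref{lem2}, and Corollary~\ref{cor}. By \eqref{exa}--\eqref{exa2} the problem $\bf(HE)$ is precisely $\bf(SCP)_{\mathscr{A},\mathscr{B}}$, and by Lemma~\ref{lem1}(ii) the operator $\mathscr{A}=\mathfrak{A}=A_{-1}+\mathscr{P}$, with $\mathscr{P}=BM$, generates the $C_0$-semigroup $\mathscr{T}$ on $H$; together with the surjectivity of $\mathscr{G}$ this verifies {\bf(A1)}--{\bf(A2)} and legitimises the reformulation. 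According to Theorem~\ref{TH32}, $\bf(SCP)_{\mathscr{A},\mathscr{B}}$ has a solution if and only if the input map associated with $(\mathscr{A},\mathscr{B})$ is Hilbert--Schmidt from $\mathcal{U}_T$ into $H$, that is, if and only if $\mathscr{B}$ is an $\mathcal{S}$-admissible control operator for $\mathscr{A}$; this is the only point that genuinely needs an argument.

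To establish the $\mathcal{S}$-admissibility of $\mathscr{B}$ for the perturbed generator $\mathscr{A}$, I would simply invoke Corollary~\ref{cor}. Its hypotheses hold here: $\mathscr{P}=BM\in\mathcal{L}(H,H_{-1})$ is an admissible control operator for $A$ by Lemma~\ref{lem1}(i), and $\bf(SCP)_{A,\mathscr{B}}$ has a solution because, by Lemma~\ref{lem2}, $\mathscr{B}$ is an $\mathcal{S}$-admissible control operator for $A$, which by Theorem~\ref{TH32} is exactly the statement that $\bf(SCP)_{A,\mathscr{B}}$ is solvable. Corollary~\ref{cor} then yields a solution of $\bf(SCP)_{\mathscr{A},\mathscr{B}}$, equivalently the $\mathcal{S}$-admissibility of $\mathscr{B}$ for $\mathscr{A}$; in particular the stochastic convolution $\int_0^t\mathscr{T}_{-1}(t-s)\mathscr{B}\,dW(s)$, which a priori only takes values in $H_{-1}$, in fact takes values in $H$.

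It then remains to collect the conclusions. Since $\bf(SCP)_{\mathscr{A},\mathscr{B}}$ admits a solution, that solution is unique---a solution of a linear stochastic Cauchy problem, when it exists, is unique, see \cite[Proposition 2.4]{Vanneerven2013}---and the general representation of the mild solution from \cite[Theorem 5.4]{da2014stochastic}, which by the previous paragraph now holds in $H$, gives
\[
X(t)=\mathscr{T}(t)X_0+\int_0^t\mathscr{T}_{-1}(t-s)\mathscr{B}\,dW(s),\qquad t\in[0,T],
\]
for every $X_0\in H$. Within the development of the paper this last step is pure bookkeeping; the one ingredient carrying real content---and hence the step I would regard as the main obstacle, were the theorem to be proved from scratch---is the invariance of $\mathcal{S}$-admissibility under a Desch--Schappacher perturbation, i.e. Corollary~\ref{cor}, which is reduced via the duality of Proposition~\ref{prop3} to Theorem~\ref{3.8} and ultimately to a Paley--Wiener/Hardy-space argument.
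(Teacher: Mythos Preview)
Your proof is correct and follows exactly the paper's approach: reformulate $\bf(HE)$ as $\bf(SCP)_{\mathscr{A},\mathscr{B}}$ via Lemma~\ref{lem1} and \eqref{exa2}, then combine Lemma~\ref{lem1}(i), Lemma~\ref{lem2}, and Corollary~\ref{cor}. You have simply made explicit the chain of implications that the paper compresses into a single sentence.
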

		\begin{proof}
			Using the Lemma \ref{lem1} and \eqref{exa2}, the problem $\bf(HE)$ is reformulated as
			\begin{align*}
				\begin{cases}
					dX(t)=\mathfrak{A}X(t)dt+\mathscr{B}dW(t), &t\in[0,T]\\
					X(0)=X_0,
				\end{cases}
			\end{align*}
			The result follows from Lemma \ref{lem1}, Lemma \ref{lem2} and Corollary \ref{cor}.
		\end{proof}




\begin{thebibliography}{99}
	\bibitem{Vanneerven2013}
J. Abreu, B. Haak, J. Van Neerven, The stochastic Weiss conjecture for bounded analytic semigroups. Journal of the London Mathematical Society 88(1) (2013) 181--201.

%


\bibitem{bala}
A.V. Balakrishnan, Identification and stochastic control of a class of distributed systems with boundary noise, in: Control Theory, Numerical Methods and Computer Systems Modelling (Internat. Sympos., IRIA LABORIA, Rocquencourt, 1974), in: Lecture Notes in Econom. and Math. Systems,  vol. 107, Springer, Berlin, 1975, pp. 163--178. Walter de Gruyter Co., Berlin (1996).



\bibitem{brzezniak}
Z. Brze\'zniak, B. Goldys, S. Peszat, F. Russo, Second order PDEs with Dirichlet white noise boundary conditions. Journal of Evolution Equations 15(1) (2015) 1--26.


\bibitem{da2014stochastic}
G. Da Prato, J. Zabczyk, Stochastic equations in infinite dimensions. Cambridge university press (2014).

\bibitem{da1996ergodic}
G. Da Prato, J. Zabczyk, Ergodicity for infinite dimensional systems. Cambridge University Press (1996).

\bibitem{da1993stochastic}
G. Da Prato, J. Zabczyk,  Evolution equations with white-noise boundary conditions. Stochastics and Stochastics Reports 42 (1992) 167--182.


%
\bibitem{debussche}
A. Debussche, M. Fuhrman, G. Tessitore, Optimal control of a stochastic heat equation with boundary-noise and boundary-control. ESAIM: Control, Optimisation and Calculus of Variations 13(1) (2007) 178--205.

\bibitem{desch}
W. Desch, W. Schappacher, Some generation results for perturbed semigroups, in: P. Clement, S. Invernizzi, E. Mitidieri, I.I. Vrabie (Eds.), Semigroup Theory and Applications (Proceeding, Trieste, 1987), in: Lecture Notes in Pure and Appl. Math. 116  Dekker (1989) 125--152.

\bibitem{nagel}
K. J. Engel,  R. Nagel, One-Parameter Semigroups for Linear Evolution Equations. SpringerVerlag (2000).


\bibitem{fabbri}
G. Fabbri, B. Goldys, An LQ problem for the heat equation on the halfline with Dirichlet boundary control and noise. SIAM journal on control and optimization 48(3) (2009) 1473--1488.



\bibitem{fkirine2021}
M. Fkirine, S. Hadd, Solving stochastic equations with unbounded nonlinear perturbations. arXiv preprint arXiv:2102.06996 (2021).


\bibitem{goldys}
B. Goldys, S. Peszat, Linear parabolic equation with Dirichlet white noise boundary conditions. arXiv preprint arXiv:2109.05428 (2021).

\bibitem{gozzi}
F. Gozzi, E. Rouy, A. Swiech, Second order Hamilton--Jacobi equations in Hilbert spaces and stochastic boundary control. SIAM Journal on Control and Optimization 38(2) (2000) 400--430.

\bibitem{Greiner}
G. Greiner, Perturbing the boundary-conditions of a generator. Houston Journal of mathematics 13(2) (1987) 213--229.

\bibitem{guatteri}
G. Guatteri, F. Masiero, On the existence of optimal controls for SPDEs with boundary noise and boundary control. SIAM Journal on Control and Optimization 51(3) (2013) 1909--1939.

\bibitem{haak}
B. Haak, J. Van Neerven,  M. Veraar, A stochastic Datko–Pazy theorem. Journal of mathematical analysis and applications 329(2) (2007) 1230--1239.

\bibitem{hadd2015}
S. Hadd, R. Manzo, A. Rhandi, Unbounded perturbations of the generator domain. Discrete Contin. Dyn. Syst. 35(2) (2015) 703--723.

\bibitem{hadd2005unbounded}
 S. Hadd, Unbounded perturbations of $C_0$--semigroups on Banach spaces and applications. Semigroup forum 70(3) (2005) 451--465.


\bibitem{Jacob2003}
B. Jacob, J. R. Partington, S. Pott, Conditions for admissibility of observation operators and boundedness of Hankel operators. Integral Equations and Operator Theory 47(3) (2003) 315--338.




\bibitem{lahbiri2021}
F. Z. Lahbiri, S. Hadd, A functional analytic approach to infinite dimensional stochastic linear systems. SIAM Journal on Control and Optimization 59(5) (2021) 3762--3786.

\bibitem{lasiecka}
I. Lasiecka, R. Triggiani, Control theory for partial differential equations: continuous and approximation theories. I. volume 74 of Encyclopedia of Mathematics and its Applications (2000).




\bibitem{masiero}
F. Masiero, A stochastic optimal control problem for the heat equation on the halfline with Dirichlet boundary-noise and boundary-control. Applied Mathematics \& Optimization 62(2) (2010) 253--294.


\bibitem{Nerveen}
V. Neerven, L. Weis, Stochastic integration of functions with
values in a Banach space. Studia Math.  166(2) (2005) 131--170.
\bibitem{Peszat1992}
S. Peszat, Equivalence of distributions of some Ornstein–Uhlenbeck processes taking values in Hilbert space. Probab. Math. Statist. 13 (1992) 7--17.




\bibitem{tucsnak2009observation}
M. Tucsnak, G. Weiss, Observation and control for operator semigroups. Birkh\"auser Basel (2009).
\bibitem{weiss1989admissible}
G. Weiss, Admissible observation operators for linear semigroups.  Israel J. Math. 65 (1989)  17--43.
\bibitem{weiss1989admissiblea}
G. Weiss, Admissibility of unbounded control operators. SIAM Journal on Control and Optimization 27(3) (1989) 527-545.
\end{thebibliography}


\end{document}